\title[Full dimensional tori for NLW]{The existence of full dimensional invariant tori for 1-dimensional nonlinear wave equation}
\thanks{The author is supported by the National Natural Science Foundation of China (No.11671066).}
\author{Hongzi Cong}
\address[Hongzi Cong]{School of Mathematical Sciences,
Dalian University of Technology, Dalian, Liaoning 116024, China} \email{conghongzi@dlut.edu.cn}
\author{Xiaoping Yuan}
\address[Xiaoping Yuan]{School of Mathematical Sciences,
Fudan University,
Shanghai 200433,
P. R. China} \email{xpyuan@fudan.edu.cn}
\keywords{KAM theory, almost periodic solution, nonlinear wave equation, Gevrey space.}
\theoremstyle{plain}
\newtheorem{thm}{Theorem}[section]
\newtheorem{lem}[thm]{Lemma}
 \theoremstyle{definition}
\newtheorem{defn}[thm]{Definition}
 \theoremstyle{remark}
 \newtheorem{rem}[thm]{Remark}
 \numberwithin{equation}{section}
\begin{document}


\begin{abstract}
In this paper we prove the existence and linear stability of full dimensional tori with subexponential decay for 1-dimensional nonlinear wave equation with external parameters, which relies on the method of KAM theory and the idea proposed by Bourgain \cite{BJFA2005}.
\end{abstract}

\maketitle
\section{Introduction and main result}
Consider 1-dimensional nonlinear wave equation (NLW)
\begin{eqnarray}\label{maineq0}
u_{tt}=u_{xx}-V*u-u^3
\end{eqnarray}
on the finite interval $x\in[0,\pi]$ with Dirichlet boundary conditions
\begin{equation*}
u(t,0)=u(t,\pi)=0,\qquad -\infty<t<+\infty,
\end{equation*}
where $V*$ is the Fourier multiplier defined by
\begin{equation*}\widehat{V*u}(n)=V_n\widehat{u}(n)
\end{equation*}
and $\left(V_n\right)_{n\in\mathbb{N}^*}$ are independently chosen in $\left[0,1\right]$, $\mathbb{N}^*=\mathbb{N}\setminus\{0\}$.

To state our results, we need some notations and definitions. Let $z=(z_n)_{n\in\mathbb{N}^*}$ and its complex conjugate $\bar {z}=(\bar z_n)_{n\in\mathbb{N}^*}$. Introduce $I_n=|z_n|^2$ and $J_n=I_n-I_n(0)$, where $I_n(0)$ will be considered as the initial data.
Consider the Hamiltonian $R$ with the following form
 \begin{equation}\label{031901}
 R(z,\bar z)=\sum_{a,k,k'\in{\mathbb{N}^{\mathbb{N}^*}}}B_{akk'}\mathcal{M}_{akk'}
 \end{equation}
with
\begin{eqnarray}\nonumber
\mathcal{M}_{akk'}=\prod_{n\in\mathbb{N}^*}I_n(0)^{a_n}z_n^{k_n}\bar z_n^{k_n'},
\end{eqnarray}
and $B_{akk'}$ are the coefficients.
\begin{defn}\label{020803}
Fixed any $a,k,k'\in\mathbb{N}^{\mathbb{N}^*}$,
denote $(n_i)_{i\geq1}$ the decreasing rearrangement of
\begin{equation*}
\{n:\ \mbox{where $n$ is repeated}\ 2a_n+k_n+k_n'\ \mbox{times}\},
\end{equation*}
i.e.
\begin{equation}\label{020802}
(n_i)_{i\geq1}=(n_1,n_2,\dots,n_l)
\end{equation}
with $l=\sum_{n\in\mathbb{N}^*}(2a_n+k_n+k_n')$ and $n_1\geq n_2\geq n_3\geq \dots$, and $(n_i^*)_{i\geq1}$ the decreasing rearrangement of
\begin{equation*}
\{n:\ \mbox{where $n$ is repeated}\ \left|k_n-k_n'\right|\ \mbox{times}\},
\end{equation*}
i.e.
\begin{equation}
(n_i^*)_{i\geq1}=(n_1^*,n_2^*,\dots,n_j^*)
\end{equation}
with $j=\sum_{n\in\mathbb{N}^*}\left|k_n-k_n'\right|$ and $n_1^*\geq n_2^*\geq n_3^*\geq \dots$.
\end{defn}
\begin{rem}\label{013001.}
Noting that $2a_n+k_n+k_n'\geq \left|k_n-k_n'\right|$, then for any $1\leq i\leq \sum_{n\in\mathbb{N}^{*}}\left|k_n-k_n'\right|$ one has
\begin{equation}\label{021101}
n_i\geq n_i^*,
\end{equation}where $(n_i)_{i\geq 1}$ and $(n_i^*)_{i\geq 1}$ are two decreasing rearrangements defined in Definition \ref{020803}.
\end{rem}

For $n\in\mathbb{N}^*$, take the $ \omega_{n} $ to be random in $[0,1/n]$ and denote $ \|x\| = dist (x,\mathbb{Z})$.
\begin{defn}\label{022005}(Nonresonant Conditions) For any $k,k'\in\mathbb{Z}^{\mathbb{N}^*}$ with $k\neq k'$,
we say $\omega=(\omega_n)_{n\in\mathbb{N}^*}$ is nonresonant in the following sense:
there exists a real number $0<\gamma<1$ such that the following inequalities hold
\begin{eqnarray}
\label{005} \left\| \sum_{n\in\mathbb{N}^*}(k_{n}-k_n')\omega_{n}\right\|\geq \gamma \prod_{n\in \mathbb{N}^*}\frac{1}{1+(k_{n}-k_n')^2n^{5}},
\end{eqnarray}
and
if $n^*_3<n_2^*$ and $\sum_{n\in\mathbb{N}^*}\left|k_n-k_n'\right|\geq3$, then
\begin{eqnarray}
\label{005.} \left\| \sum_{n\in\mathbb{N}^*}(k_{n}-k_n')\omega_{n}\right\|\geq \frac{\gamma^3}{16} \prod_{n\in \mathbb{N}^*\atop n\neq n_1^*,n_2^*}\left(\frac{1}{1+(k_{n}-k_n')^2n^{6}}\right)^4,
\end{eqnarray}
whenever $ 0 \neq k-k' = (k_{n}-k_n')_{n\in \mathbb{N}^*}$ is a finitely supported sequence of integers.
\end{defn}

 Given $\theta\in (0,1)$ and $r>0$, define Banach space $G^{r,\theta}$ of all complex sequences $w=(w_1,w_2,\dots)$ with the finite norm
\begin{eqnarray*}
\label{004} \|w \|_{r,\theta}= \sup_{n\in\mathbb{N}^*}|w_{n}|e^{rn^{\theta}}.
\end{eqnarray*}
Now our main result is as follows:
\begin{thm}\label{Thm}
Given $r>0$, $ 0 < \theta < 1$ and a frequency vector $ \omega = (\omega_{n})_{n\in\mathbb{N}^*}$ satisfying the nonresonant conditions (\ref{005}) and (\ref{005.}),\ then for sufficiently small $ \epsilon > 0 $ there exist $V=(V_n)_{n\in\mathbb{N}^*}$ with $V_n\in\left[0,1\right]$, such that (\ref{maineq0}) has a full dimensional invariant torus $ \mathcal{E}$ with amplitude in $ G^{r,\theta}$ satisfying:\\
(1). the amplitude $I=(I_n)_{n\in\mathbb{N}^*}$ of $ \mathcal{E}$ restricted as
\begin{equation*}
\frac14\epsilon^2e^{-2rn^{\theta}}\leq |I_n|\leq 4\epsilon^2 e^{-2rn^{\theta}};
\end{equation*}
(2). the frequency on  $ \mathcal{E}$ prescribed to be $(n+\omega_n)_{n\in\mathbb{N}^*}$;\\
(3). the invariant tori  $ \mathcal{E}$ linearly stable.
\end{thm}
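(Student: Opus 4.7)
The plan is to recast (\ref{maineq0}) as an infinite-dimensional Hamiltonian system and construct the invariant torus $\mathcal E$ through a Bourgain-type KAM iteration adapted to the full-dimensional setting. Expanding $u(t,x)=\sum_{n\geq 1}q_n(t)\sqrt{2/\pi}\sin(nx)$, fixing $V_n$ so that $\sqrt{n^2+V_n}=n+\omega_n$, and introducing complex symplectic variables $z_n=(\sqrt{\lambda_n}q_n+ip_n/\sqrt{\lambda_n})/\sqrt{2}$ with $p_n=\dot q_n$ and $\lambda_n=n+\omega_n$, equation (\ref{maineq0}) becomes Hamiltonian with
\[
H(z,\bar z)=\sum_{n\geq 1}\lambda_n|z_n|^2+P(z,\bar z),
\]
where $P$ is a quartic polynomial arising from $\frac14\int_0^\pi u^4\,dx$. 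The goal is to build a symplectic map $\Phi_\infty$, analytic on a complex neighborhood of $\mathcal E=\{|z_n|^2=I_n(0)\}$ in $G^{r,\theta}$ with $I_n(0)\in[\tfrac14\epsilon^2e^{-2rn^{\theta}},4\epsilon^2 e^{-2rn^{\theta}}]$, which conjugates $H$ to a normal form $H_\infty=\sum_n\lambda_n|z_n|^2+N_\infty(I)$ satisfying $\partial_{I_n}N_\infty(I(0))=0$. Such a normal form leaves $\mathcal E$ invariant with the prescribed frequencies $(n+\omega_n)_{n\in\mathbb{N}^*}$.

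I would construct $\Phi_\infty=\lim_{s\to\infty}e^{F^{(0)}}\circ\cdots\circ e^{F^{(s)}}$ as an iteration in which each $F^{(s)}$ is a Hamiltonian of the form (\ref{031901}). The inductive invariant at step $s$ is that the perturbation $R^{(s)}$ lies in the class (\ref{031901}) with coefficients $B^{(s)}_{akk'}$ bounded in a weighted norm combining the Gevrey decay built into $I_n(0)^{a_n}=\epsilon^{2a_n}e^{-2r\sum_n a_n n^{\theta}}$ with a shrinking smallness factor $\varepsilon_s$. Resonant monomials ($k=k'$) are absorbed into the evolving normal form, while non-resonant ones are removed via the homological equation $\{H_0,F^{(s)}\}=R^{(s),\mathrm{nr}}$, solved by dividing each coefficient by
\[
\Delta_{kk'}=\sum_{n\in\mathbb{N}^*}(k_n-k_n')(n+\omega_n).
\]
Since $\sum_n(k_n-k_n')n\in\mathbb{Z}$, one has $|\Delta_{kk'}|\geq\|\sum_n(k_n-k_n')\omega_n\|$, and the nonresonance conditions (\ref{005}), (\ref{005.}) supply the required lower bounds in the two regimes captured by Definition~\ref{022005}.

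The main analytical obstacle, and the step I expect to be most delicate, is that dividing by $\Delta_{kk'}^{-1}$ introduces a polynomial loss, roughly $\prod_n n^{5|k_n-k_n'|}$ from (\ref{005}) (and the analogous loss from (\ref{005.})), which in principle could swamp the Gevrey weights. The mechanism that rescues the iteration is precisely Remark~\ref{013001.}: the Gevrey decay carried by $I_n(0)^{a_n}$ together with the $z_n,\bar z_n$ factors attaches a weight $e^{-r n_i^{\theta}}$ to each index $(n_i)_{i\geq 1}$ of Definition~\ref{020803}, and by (\ref{021101}) these dominate the indices $(n_i^*)_{i\geq 1}$ appearing in the small divisor. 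Since $\theta<1$ ensures that $e^{r n^{\theta}}$ beats any polynomial in $n$, the losses are absorbed index by index. Both (\ref{005}) and (\ref{005.}) are needed: the former handles generic monomials, while the sharper (\ref{005.}) is invoked exactly in the gap-separated regime $n_3^*<n_2^*$, where the product bound from (\ref{005}) alone would not yield a summable loss. A careful combinatorial bookkeeping then produces $\varepsilon_{s+1}\lesssim\varepsilon_s^{1+\delta}$ for some $\delta>0$, so the iteration converges super-geometrically and $\Phi_\infty$ is well defined on a complex neighborhood of $\mathcal E$ in $G^{r,\theta}$. The amplitude bound (1) follows from the smallness of $\Phi_\infty-\mathrm{id}$ in the Gevrey norm; property (2) is built into the limiting normal form; and linear stability (3) is automatic, since the linearized flow around $\mathcal E$ in the final coordinates is block-diagonal in $(z_n,\bar z_n)$ with purely imaginary eigenvalues, preserving the $G^{r,\theta}$ norm of solutions for all time.
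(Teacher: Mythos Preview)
Your overall architecture---rewrite as a Hamiltonian, run a Bourgain-type KAM iteration, absorb resonant monomials, solve homological equations using (\ref{005}) and (\ref{005.})---matches the paper's. But there is a genuine gap in how you handle the external parameters $V$ and the frequency shift.

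You fix $V_n$ at the outset by $\sqrt{n^2+V_n}=n+\omega_n$ and then assert that the limiting normal form satisfies $\partial_{I_n}N_\infty(I(0))=0$. No mechanism in your proposal enforces this. At every KAM step the resonant part $[R_1]$ (see (\ref{043})) contributes a frequency shift $\sum_a B^{(n)}_{a00}\mathcal{M}_{a00}$, so the effective frequencies drift away from $(n+\omega_n)$. Once they drift, the nonresonance conditions (\ref{005}), (\ref{005.}) you invoke are no longer guaranteed for the actual divisors, and in any case conclusion (2) of the theorem fails. The paper does \emph{not} fix $V$: it treats $V$ as an unknown, carries analytic dependence on $V$ through the iteration on shrinking cubes $\mathcal C_{\eta_s}(V_s^*)$, and at each step solves $\widetilde V_{n,s+1}(V_{s+1}^*)=n\omega_n+\omega_n^2$ via the inverse function theorem (Lemma~\ref{E2}, around (\ref{089}), (\ref{090}) and the paragraph at (\ref{120})). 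This counter-term procedure is what makes $V$ an \emph{output} of the construction, as the theorem's statement ``there exist $V$'' requires.

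There is also an NLW-specific point you do not address: for the inverse function argument to close, the frequency shift must be $O(1/n)$, since $\omega_n\in[0,1/n]$ forces $V_n\in[0,1]$ to live in a window of width $O(1)$ but the map $V_n\mapsto\omega_n$ has derivative $O(1/n)$. The paper secures this via the weight $\prod_n n^{\frac12(2a_n+k_n+k_n')}$ built into the norm (\ref{H00}) and the resulting estimate (\ref{114}); this is precisely the ``modified norm'' the introduction flags as a departure from the NLS case in \cite{BJFA2005}. Your proposal's description of the weights (``attaches a weight $e^{-rn_i^\theta}$ to each index'') omits both this polynomial factor and the crucial $-2n_1^\theta$ loss in (\ref{H00}); without them neither the small-divisor bound of Lemma~\ref{N3} (which hinges on Lemma~\ref{020801} controlling only $\sum_{i\ge3}n_i^\theta$) nor the $1/n$ frequency-shift bound goes through.
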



The existence and linear stability of invariant tori for Hamiltonian PDEs have drawn a lot of concerns during the last decades.  There are many related works for 1-dimensional PDEs. See \cite{BBHM2018,CW1993,K1987,KB2000,KP1996,P1996,W1990,KPB2003,LY2011,BBM2014,BBM2016,FP2015,GL2017,Wang2016} for example. For high dimensional PDEs, Bourgain \cite{BA1998,BB2005} developed a new method initialed by Craig-Wayne \cite{CW1993} to prove the existence of KAM tori for $d$-dimensional nonlinear Schr$\ddot{\mbox{o}}$dinger equations (NLS) and $d$-dimensional NLW with $d\geq 1$, based on the Newton iteration, Fr$\ddot{\mbox{o}}$hlich-Spencer techniques, Harmonic analysis and semi-algebraic set theory. This is so-called C-W-B method.
Later, Eliasson-Kuksin \cite{EK2010} proved a classical KAM theorem which can be applied to $d$-dimensional NLS. It is obtained the existence of KAM tori as well as the linear stability of such tori. Also see \cite{BBNL,BBJEMS,BK2019,Wang2016} for the related problem.

In the above works, the obtained KAM tori are of low dimension which are the support of the quasi-periodic solutions. It must be noted that the constructed quasi-periodic solutions are not typical in the sense that the low dimensional tori have measure zero for any reasonable measure on the infinite dimensional phase space. It is natural at this point to find the full dimensional tori which are the support of the almost periodic solutions. The first result on the existence of almost periodic solutions for 1-dimensional NLW was given by Bourgain in \cite{B1996} using C-W-B method. Later, P$\ddot{\mbox{o}}$schel \cite{P2002} (also see \cite{GX2013} by Geng-Xu) constructed the almost periodic solutions for 1-dimensional NLS by the classical KAM method. These almost periodic solutions were obtained by successive small perturbations of quasi-periodic solutions. To avoid the number of the small divisors increasing fast, the action $I=(I_n)$ must satisfy some very strong compactness properties. In fact, the following super-exponential decay for the action $I$ is given
 $$I_n\sim C^{e^{-|n|}},\qquad C>1,$$
 as $n\rightarrow\infty$. It means that these solutions are with very high regularity and looks like the quasi-periodic ones. Hence, Kuksin raised the following open problem (see Problem 7.1 in \cite{KZ2004}):
 \vskip8pt
 {\it Can the full dimensional KAM tori be expected with a suitable decay, for example, $$I_n\sim|n|^{-C}$$ with some $ C>0$ as $|n|\rightarrow+\infty$ ? }
\vskip8pt

The first try to obtain the existence of full dimensional tori with slower decay was given by Bourgain \cite{BJFA2005}, who proved that 1-dimensional NLS has a full dimensional KAM torus of prescribed frequencies with the actions of the tori obeying the estimates
\begin{equation*}\frac{1}{2}e^{-r{|n|^{1/2}}}\leq I_n\leq 2e^{-r{|n|^{1/2}}}, \qquad r>0.
\end{equation*}
 Recently, Cong-Liu-Shi-Yuan \cite{CLSY} generalized Bourgain's result from $\theta=1/2$ to $0<\theta<1$, i.e the actions of the tori satisfying
\begin{equation*}\label{020203}\frac{1}{2}e^{-r{|n|^{\theta}}}\leq I_n\leq 2e^{-r{|n|^{\theta}}},\qquad \theta\in (0,1)\ \mbox{and}\ r>0.
\end{equation*}
Moreover the authors proved the obtained tori are stable in a sub-exponential long time.

Different from the ideas in \cite{B1996} and \cite{P2002}, Bourgain treated all Fourier modes at once under some suitable Diophantine conditions. See the nonresonant conditions (\ref{005}) for the details, which is similar as the one given in \cite{BJFA2005}. It is well known that the core of KAM theory is how to deal with small divisor. Note that the conditions (\ref{005}) is totally different from the nonresonant conditions used to construct the low dimensional tori, since the factors $n^5$ appears in the denominator, which causes a much worse small denominator problem. Two key observations are given by Bourgain: one is the inequality (\ref{001.}) for $\theta=1/2$; the other is as follows: let
$n_i$ be a finite set of modes satisfying
\begin{equation*}
|n_1|\geq |n_2|\geq |n_3|\geq \cdots
\end{equation*}
and
\begin{equation}\label{022001}
n_1-n_2+n_3-n_4+\cdots=0.
\end{equation}
Note an important fact that in the case of a `near' resonance, there is also a relation
\begin{equation}\label{022002}
n_1^2-n_2^2+n_3^2-n_4^2+\cdots=o(1).
\end{equation}
Unless $n_1=n_2$, from (\ref{022001}) and (\ref{022002}) one has
\begin{equation}\label{022004}|n_1|+|n_2|\leq C\left(|n_3|+|n_4|+\cdots\right),
 \end{equation}where $C$ is a positive constant. In another word, the first two biggest indices $n_1$ and $n_2$ can be controlled by other indices,
which is essential to overcome the small divisor, i.e. giving some good estimate of the solution of homological equation (see Lemma \ref{N3} for the details).

As everyone knows that NLS and NLW are two typical Hamiltonian PDEs which can be considered as touchstones of KAM theory for infinite dimensional Hamiltonian system (see \cite{KP1996} and \cite{P1996}). Some properties of these two equations are similar, but the others are not. A main difference is as follows: for NLS the growth of the frequencies are quadric (also called separation property), while the growth of the frequencies is only linear for NLW. The separation property of the frequencies is essential to control the number of the resonant sets. Eliasson-Kuksin \cite{EK2010} proved a classical KAM theorem which can be applied to $d$-dimensional NLS but not for $d$-dimensional NLW.

In this paper, we would like to study the existence of full dimensional tori for NLW (\ref{maineq0}) with subexponential decay. Our approach and its results are parallel to an investigation of 1-dimensional NLS by Bourgain in \cite{BJFA2005}. Hence some parts of the respective expositions are
quite similar. But we decided to repeat them anyway so that the reader need not refer
to \cite{BJFA2005} for the essentials. One main problem is also there is no separation property for the frequencies of NLW. That is to say the conditions (\ref{022002}) fail, which causes that
the main estimates (\ref{022004}) do not hold all the time. To overcome this difficult we will introduce some new nonresonant conditions firstly.
Precisely we assume that the frequency $\omega$ satisfies a stronger nonresonant conditions (see (\ref{005}) and (\ref{005.}) in Definition \ref{022005} below), which is helpful to control the solution of homological equation (see Lemma \ref{N3} for the details). Of course, we have to show such nonresonant conditions hold for most of $\omega$ in the sense of some measure, which is proven in Lemma \ref{050601}.  Another problem is that we have to show it is possible to choose some parameters $V=(V_n)$ such that the frequency $\omega$ is fixed during the KAM iterations. Different from the case for NLS, the frequency $\omega$ here belongs to $\ell^2$ instead of $\ell^{\infty}$. Therefore, the frequency shift should be calculated carefully to guarantee the inverse function theorem works (see (\ref{114}) for the details). To this end, we introduce the modified norm for the Hamiltonian compared to the one defined in \cite{BJFA2005}, which is based on the regularity of the nonlinear terms for NLW (see Definition \ref{020804} for the details). Also we will give some elementary estimates about this norm. After that, we obtain the existence and linear stability of full dimensional tori with subexponential decay for NLW by a KAM iterative process.

Finally, we also mention a recent work by L. Biasco, J. E. Massetti and M. Procesi \cite{BEP2019}. The authors proved the existence of linear stability of almost periodic solution for 1-dimensional NLS with external parameters with a more geometric point of view by constructing a rather abstract \textbf{counter-term theorem} for infinite dimensional Hamiltonian system. Another interesting byproduct is that a construction of elliptic tori independent of their dimension.

\section{KAM Theorem}
\subsection{Some notations and the norm of the Hamiltonian}
\begin{lem}\label{020801}
Consider the decreasing rearrangement $(n_i)_{i\geq1}$ which is defined by (\ref{020802}) in Definition \ref{020803}
and assume that there are $(\mu_i)_{i\geq 1}$ with $\mu_i\in\{1,-1\}$ such that
\begin{equation}\label{0m}\sum_{n\in\mathbb{N}^*}\mu_in_i=0.\end{equation}
Then for any $0<\theta<1$, one has
\begin{equation}\label{001.}
\sum_{n\in\mathbb{N}^*}(2a_n+k_n+k_n')n^{\theta}-2n_1^{\theta}\geq\left(2-2^{\theta}\right)\sum_{i\geq 3}n_i^{\theta}.
\end{equation}
\end{lem}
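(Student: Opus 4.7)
The plan is to reduce (\ref{001.}) to a sharp bound on $n_1^\theta - n_2^\theta$ via iterated concavity. Since the decreasing rearrangement (\ref{020802}) is just a relabelling of the multiset in which each $n$ appears $2a_n + k_n + k_n'$ times, we have $\sum_{n\in\mathbb{N}^*}(2a_n+k_n+k_n')n^\theta = \sum_{i\geq 1} n_i^\theta$, and (\ref{001.}) rearranges to the equivalent statement
\[
n_1^\theta - n_2^\theta \;\leq\; (2^\theta - 1)\sum_{i\geq 3} n_i^\theta. \qquad (\star)
\]
The main analytic tool is the elementary one-step estimate
\[
(x+y)^\theta - x^\theta \;\leq\; (2^\theta - 1)\,y^\theta \qquad \text{for all } x\geq y > 0,
\]
which holds because $x\mapsto(x+y)^\theta - x^\theta$ has nonpositive derivative on $[y,\infty)$ (using $\theta<1$) and attains its maximum $(2y)^\theta - y^\theta = (2^\theta-1)y^\theta$ at the endpoint $x=y$.

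From (\ref{0m}), writing $\mu_1 n_1 = -\sum_{i\geq 2}\mu_i n_i$ and multiplying by $\mu_1\in\{\pm 1\}$ produces the key size bound $n_1 \leq \sum_{i\geq 2,\ \mu_i\neq\mu_1} n_i$. I would then split according to whether $\mu_2=\mu_1$. In Case A ($\mu_2\neq\mu_1$) this reads $n_1 \leq n_2 + \sum_{i\in B'} n_i$ with $B' = \{i\geq 3:\mu_i\neq\mu_1\} =: \{j_1<j_2<\cdots\}$. Telescoping from $x_0=n_2$ by adding $n_{j_k}$ at step $k$, the inequality $x_{k-1}\geq n_2\geq n_3\geq n_{j_k}$ holds at every step, so the one-step estimate applies throughout, and summing yields
\[
n_1^\theta \;\leq\; \Bigl(n_2 + \sum_k n_{j_k}\Bigr)^\theta \;\leq\; n_2^\theta + (2^\theta-1)\sum_k n_{j_k}^\theta \;\leq\; n_2^\theta + (2^\theta-1)\sum_{i\geq 3} n_i^\theta,
\]
which is $(\star)$.

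Case B ($\mu_2=\mu_1$) is the more delicate one: the size bound now only gives $n_1 \leq \sum_{i\in B} n_i$ with $B=\{i\geq 3:\mu_i\neq\mu_1\}=:\{j_1<j_2<\cdots\}$, so the telescoping must start from $x_0=0$. The very first step $0\to n_{j_1}$ contributes a full $n_{j_1}^\theta$ because the hypothesis $x\geq y$ of the one-step estimate fails at $x=0$; this apparent loss is absorbed by the ordering $n_{j_1}\leq n_3 \leq n_2$, which yields $n_{j_1}^\theta \leq n_2^\theta$. The remaining steps $k\geq 2$ satisfy $x_{k-1}\geq n_{j_1}\geq n_{j_k}$ and so still enjoy the sharp factor $2^\theta-1$; combining gives $n_1^\theta \leq n_2^\theta + (2^\theta-1)\sum_{k\geq 2} n_{j_k}^\theta \leq n_2^\theta + (2^\theta-1)\sum_{i\geq 3}n_i^\theta$, which is again $(\star)$. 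The only substantive obstacle is identifying the correct sharp one-step estimate and handling the first-step loss in Case B; the degenerate cases $l\leq 2$ either force $n_1=n_2$ or are vacuous, so $(\star)$ holds trivially there.
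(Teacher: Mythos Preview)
Your argument is correct. The paper does not give its own proof of this lemma; it simply cites Lemma~2.1 of \cite{CLSY} (which in turn generalizes Bourgain's Lemma~1.1 in \cite{BJFA2005} from $\theta=1/2$ to all $\theta\in(0,1)$). Your reduction to the equivalent form $n_1^\theta-n_2^\theta\le(2^\theta-1)\sum_{i\ge3}n_i^\theta$, followed by telescoping with the sharp one-step concavity bound $(x+y)^\theta-x^\theta\le(2^\theta-1)y^\theta$ for $x\ge y>0$, and the case split on $\mu_2$ to absorb the first-step loss, is precisely the standard route taken in those references. The handling of the degenerate cases and the non-emptiness of $B$ in Case~B (forced by $n_1>0$) are all fine.
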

\begin{proof}
The proof of (\ref{001.}) is the same as Lemma 2.1 in \cite{CLSY}, which generalizes the result given in Lemma 1.1 in \cite{BJFA2005}.
\end{proof}
\begin{defn}\label{020804}
For any given $ \rho > 0$ and $0<\theta<1$, define the norm of the Hamiltonian
$R$ (see (\ref{031901})) by
\begin{eqnarray}\label{H00}
\| R\|_{\rho}= \sup_{a,k,k'\in\mathbb{N}^{\mathbb{N}^*}}\frac{\left(\prod_{n\in\mathbb{N}^* }n^{\frac12(2a_n+k_n+k_n')}\right)\left|B_{akk'}\right|}{e^{\rho\left(\sum_{n\in\mathbb{N}^*}(2a_{n}+k_{n}+k_{n}')n^{\theta}
-2 n_1^{\theta}\right)}}.
\end{eqnarray}
\end{defn}
For any $k\in \mathbb{N}^{\mathbb{N}^*}$, define
\begin{equation}
\mbox{supp}\ k=\left\{n:k_n\neq 0\right\}.
\end{equation}
Rewrite $R$ as
\begin{equation}\label{N1}
R=R_0+R_1+R_2
\end{equation}
where
\begin{eqnarray}
{R}_0&=&\sum_{a,k,k'\in{\mathbb{N}^{\mathbb{N}^*}}\atop\mbox{supp}\ k\bigcap \mbox{supp}\ k'=\emptyset}B_{akk'}\mathcal{M}_{akk'},\\
{R}_1&=&\sum_{m\in\mathbb{N}^*}J_m\left(\sum_{a,k,k'\in{\mathbb{N}^{\mathbb{N}^*}}\atop\mbox{supp}\ k\bigcap \mbox{supp}\ k'=\emptyset}B_{akk'}^{(m)}\mathcal{M}_{akk'}\right),\\
{R}_2\label{022006}&=&\sum_{m_1,m_2\in\mathbb{N}^*}J_{m_1}J_{m_2}\left(\sum_{a,k,k'\in{\mathbb{N}^{\mathbb{N}^*}}\atop\mbox{no assumption}}B_{akk'}^{(m_1,m_2)}\mathcal{M}_{akk'}\right).
\end{eqnarray}Given $r>0$, let
$$D=\left\{z=(z_n)_{n\in\mathbb{N}^*}:\frac12e^{-rn^{\theta}}\leq |z_n|\leq e^{-rn^{\theta}}\right\},$$
and
$$\Pi=\left\{V=\left(V_n\right)_{n\in\mathbb{N}^*}:V_n\in\left[0,1\right]\right\}.$$
Then we have the following result:
\begin{thm}\label{thm}For $0<\theta<1$ and $r>\frac{100\rho}{2-2^{\theta}}>0$,
suppose the Hamiltonian $$H(z,\bar z)=N(z,\bar z)+\epsilon R(z,\bar z)$$ is real analytic on the domain $D\times \Pi,$
where $$N(z,\bar z)=\sum_{n\in\mathbb{N}^*}\lambda_{n}(V)|z_n|^2$$ is a normal form with
 \begin{equation}\label{020101.}
 \lambda_{n}(V)=\sqrt{n^2+V_{n}},
 \end{equation}
and $R(z,\bar z)$ satisfies
\begin{eqnarray*}
||R||_{\rho}\leq 1.
\end{eqnarray*}
Then given any $\omega=(\omega_n)_{n\in\mathbb{N}^*}$ satisfying the nonresonant conditions (\ref{005}) and (\ref{005.}) and for sufficiently small $\epsilon$ depending on $r,\rho,\theta$ and $\gamma$, there exist $V_*\in \Pi$ and a real analytic symplectic coordinate transformation
$\Phi:D_*\times\left\{V_*\right\}\rightarrow D$, where
\begin{equation*}
{D}_*=\left\{z=(z_n)_{n\in\mathbb{N}^*}:\frac{2}{3}e^{-rn^{\theta}}\leq |z_n|\leq \frac{5}6e^{-rn^{\theta}}\right\}
\end{equation*} satisfying
\begin{eqnarray*}
\sup_{z\in D_*}\left|\left|\left(\Phi-id\right)(z)\right|\right|_{r,\theta}\leq {\epsilon}^{0.4}
\end{eqnarray*}
such that for
${H}_*=H \circ\Phi ={N}_*+{R}_{2,*}$, where
\begin{equation*}
{N}_*=\sum_{n\in\mathbb{N}^*}(n+\omega_n)|z_n|^2
\end{equation*}
and ${R}_{2,*}$ has the form of (\ref{022006}) and satisfies
\begin{equation*}
||{R}_{2,*}||_{10\rho}\leq {\epsilon}^{0.4}.
\end{equation*}
\end{thm}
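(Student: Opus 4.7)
The proof I propose is a KAM iteration tailored to Bourgain's norm in Definition~\ref{020804} and to the decomposition (\ref{N1}): at each step one produces a canonical change of variables killing $R_0$ and $R_1$ while leaving $R_2$ untouched, so that the output automatically has the form (\ref{022006}). Introduce geometric sequences $\epsilon_s := \epsilon^{(3/2)^s}$ and $\rho_s := \rho(1+9(1-2^{-s})) \nearrow 10\rho$, together with intermediate normal forms $N_s = \sum_n \lambda_n(V_s)|z_n|^2$ whose frequencies $\lambda_n(V_s) - n$ will be driven toward $\omega_n$ as $s\to\infty$. The induction hypothesis at step~$s$ is $H_s = N_s + R_s$ with $\|R_s\|_{\rho_s}\leq\epsilon_s$.

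At the $s$-th step one decomposes $R_s = R_{s,0} + R_{s,1} + R_{s,2}$ and solves the homological equation
\begin{equation*}
\{N_s, F_s\} = [R_{s,0}+R_{s,1}] - (R_{s,0}+R_{s,1}),
\end{equation*}
where $[\cdot]$ projects onto the diagonal $k=k'$ monomials: $[R_{s,0}]$ is a harmless constant (both $k$ and $k'$ must vanish by the disjoint-support condition on $R_0$) while $[R_{s,1}]$ is linear in the $J_m$'s and supplies the frequency correction. Coefficient-wise $F_s$ is obtained by dividing the coefficients of $R_{s,0}+R_{s,1}$ by $i\langle k-k', \lambda(V_s)\rangle$. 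Since $\lambda_n(V_s) = n + \omega_n + O(\epsilon_s)$ by the induction hypothesis, the nonresonant conditions (\ref{005})--(\ref{005.}) transfer to these divisors up to a harmless constant factor. The crucial estimate is Lemma~\ref{N3}, $\|F_s\|_{\rho_{s+1}} \leq C\gamma^{-3}\epsilon_s$; its essence is to absorb the divisor factor $\prod_{n\neq n_1^*,n_2^*}(1+(k_n-k_n')^2 n^6)^4$ into the weight gain $\exp\bigl((\rho_{s+1}-\rho_s)\bigl(\sum_n(2a_n+k_n+k_n')n^\theta - 2n_1^\theta\bigr)\bigr)$ built into (\ref{H00}). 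By Lemma~\ref{020801} this exponent is at least $(2-2^\theta)\sum_{i\geq 3}n_i^\theta$, and by Remark~\ref{013001.} the starred rearrangement is dominated term-wise by the unstarred one, so the polynomial product is dominated by the sub-exponential weight in the unstarred tail.

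Define $\Phi_s$ as the time-$1$ flow of $F_s$ and set $N_{s+1} := N_s + [R_{s,1}]$, $R_{s+1} := H_s\circ\Phi_s - N_{s+1}$. The Lie series expansion together with elementary Poisson-bracket estimates for $\|\cdot\|_\rho$ (which must be proved separately, and in particular shown to preserve the split structure $R_0+R_1+R_2$) yields $\|R_{s+1}\|_{\rho_{s+1}} \leq \epsilon_{s+1}$. The shifts $c_{n,s}$ absorbed into $N_{s+1}$ are of size $O(\epsilon_s/n)$ because of the extra weight $\prod n^{1/2(2a_n+k_n+k_n')}$ in (\ref{H00})---the modification of Bourgain's norm emphasized in the introduction---which supplies the $\ell^2$-smallness one needs downstream. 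The parameter $V_*$ is then determined by requiring $\sqrt{n^2+V_{*,n}} = n+\omega_n - \sum_{s\geq 0} c_{n,s}(V_*)$ for all $n$; because the total shift is small in $\ell^2$ and depends on $V_*$ via a contractive map, the inverse function theorem yields a unique $V_*\in\Pi$ with $V_{*,n}\in[0,1]$. Convergence of $\Phi := \cdots\circ\Phi_1\circ\Phi_0$ to a symplectic map with $\|\Phi-\mathrm{id}\|_{r,\theta}\leq\epsilon^{0.4}$ then follows from the step-wise bound $\|\Phi_s-\mathrm{id}\|_{r,\theta}\lesssim\epsilon_s^{1/2}$.

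The main obstacle will be Lemma~\ref{N3} precisely in the regime where (\ref{005.}) is unavailable, namely when $n_3^* = n_2^*$ (or when $\sum_n|k_n-k_n'|\leq 2$). In this regime one is forced onto (\ref{005}), whose product runs over \emph{all} $n$ and therefore contains large factors attached to the top modes $n_1^*, n_2^*$ that Lemma~\ref{020801} alone cannot absorb. The argument has to exploit the combinatorial equality $n_3^* = n_2^*$: it forces at least two equal entries at the top of the starred rearrangement, and hence an extra contribution to $\sum_{i\geq 3} n_i^\theta$ on the unstarred side, which is exactly the extra absorption one needs. This combinatorial step is the NLW replacement for Bourgain's near-resonance identity (\ref{022002}) and the consequent inequality (\ref{022004}), both of which fail because the NLW frequencies grow only linearly; it is also the reason the stronger nonresonance (\ref{005.}) and the modified norm in Definition~\ref{020804} had to be introduced in the first place.
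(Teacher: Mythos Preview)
Your proposal captures the correct architecture---homological equation, Lemma~\ref{N3} for the divisors, inverse function theorem for $V_*$---and your reading of the $n_3^*=n_2^*$ case is essentially the paper's Case~2 argument in Lemma~\ref{N3}. But there are two genuine gaps that would prevent the iteration from closing.

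\textbf{The induction hypothesis is inconsistent.} You postulate $\|R_s\|_{\rho_s}\leq\epsilon_s$ for the \emph{full} remainder while also saying $R_2$ is ``left untouched.'' These are incompatible: $R_{s,2}$ stays of size $\epsilon_0$ for all $s$ (it absorbs $R_2\circ X_F^1$ plus brackets of $R_2$ with $F$), so the total $R_s$ cannot decay superexponentially, and your claim ``yields $\|R_{s+1}\|_{\rho_{s+1}}\leq\epsilon_{s+1}$'' is false as stated. The paper tracks three separate quantities in the auxiliary norm $\|\cdot\|^+$ of (\ref{049})--(\ref{052}),
\[
\|R_{0,s}\|_{\rho_s}^+\leq\epsilon_s,\qquad \|R_{1,s}\|_{\rho_s}^+\leq\epsilon_s^{0.6},\qquad \|R_{2,s}\|_{\rho_s}^+\leq(1+d_s)\epsilon_0,
\]
and the recursion (\ref{0861})--(\ref{0863}) shows these rates are not arbitrary: for instance $\|R_{1,s+1}\|^+$ is bounded by $\|R_{0,s}\|^++(\|R_{1,s}\|^+)^2$, and only the intermediate exponent $0.6$ makes both terms land below $\epsilon_{s+1}^{0.6}$. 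A single norm bound cannot encode this; the $\|\cdot\|^+$ norm, which attaches the extra weight $m$ and $e^{2\rho m^\theta}$ to each $J_m$-factor, is what makes the split hypotheses propagate.

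\textbf{The weight increments decay too fast.} Your choice $\rho_s=\rho(1+9(1-2^{-s}))$ gives $\delta_s:=\rho_{s+1}-\rho_s\sim 2^{-s}$, but the small-divisor cost in Lemma~\ref{N3} is $e^{C(\theta)\delta^{-5/\theta}}$. With geometric $\delta_s$ this loss is $\exp\bigl(C\,2^{5s/\theta}\bigr)$, doubly exponential in $s$, whereas the available smallness is only $\epsilon_s^{-\alpha}=\exp\bigl(\alpha(3/2)^s\ln(1/\epsilon_0)\bigr)$. Since $2^{5/\theta}>3/2$ for every $\theta\in(0,1)$, the loss eventually dominates and the iteration diverges regardless of how small $\epsilon$ is. The paper instead takes $\delta_s=\rho/s^2$: then the loss $\exp\bigl(Cs^{10/\theta}\bigr)$ is sub-exponential in $s$ and is beaten by $\epsilon_s^{-\alpha}$. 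This slower schedule forces a truncation step (only monomials with $\sum_{i\geq3}n_i^\theta\leq B_s$ are eliminated, see (\ref{098})) and a more careful tracking of analyticity in the parameter $V$ on shrinking cubes $\mathcal{C}_{\eta_s}(V_s^*)$, both absent from your sketch.
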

\subsection{Derivation of homological equations}
The proof of Theorem \ref{thm} employs the rapidly
converging iteration scheme of Newton type to deal with small divisor problems
introduced by Kolmogorov, involving the infinite sequence of coordinate transformations.
At the $s$-th step of the scheme, a Hamiltonian
$H_{s} = N_{s} + R_{s}$
is considered, as a small perturbation of some normal form $N_{s}$ with the form of $$N_{s}=\sum_{n\in\mathbb{N}^*}\lambda_{n,s}(V)|z_n|^2,$$ where
 \begin{equation*}\label{020101}
 \lambda_{n,s}(V)=\sqrt{n^2+\widetilde V_{n,s}(V)}.
 \end{equation*}A transformation $\Phi_{s}$ is
set up so that
$$ H_{s}\circ \Phi_{s} = N_{s+1} + R_{s+1}$$
with another normal form $N_{s+1}$ and a much smaller perturbation $R_{s+1}$. We drop the index $s$ of $H_{s}, N_{s}, R_{s}, \Phi_{s}$ and shorten the index $s+1$ as $+$.

We desire to eliminate the terms $R_0,R_1$ in (\ref{N1}) by the coordinate transformation $\Phi$, which is obtained as the time-1 map $X_F^{t}|_{t=1}$ of a Hamiltonian
vector field $X_F$ with $F=F_0+F_1$. Let ${F}_{0}$ (resp. ${F}_{1}$) has the form of ${R}_0$ (resp. ${R}_{1}$),
that is \begin{eqnarray}
\label{039}&&{F}_0=\sum_{a,k,k'\in{\mathbb{N}^{\mathbb{N}^*}}\atop\mbox{supp}\ k\bigcap \mbox{supp}\ k'=\emptyset}F_{akk'}\mathcal{M}_{akk'},\\
\label{040}&&{F}_1=\sum_{m\in\mathbb{N}^*}J_m\left(\sum_{a,k,k'\in{\mathbb{N}^{\mathbb{N}^*}}\atop\mbox{supp}\ k\bigcap \mbox{supp}\ k'=\emptyset}F_{akk'}^{(m)}\mathcal{M}_{akk'}\right),
\end{eqnarray}
and the homological equations become
\begin{equation}\label{041}
\{N,{F}\}+R_0+R_{1}=[R_0]+[R_1],
\end{equation}
where
\begin{equation}\label{042}
[R_0]=\sum_{a\in\mathbb{N}^{\mathbb{N}^*}}B_{a00}\mathcal{M}_{a00},
\end{equation}
and
\begin{equation}\label{043}
[R_1]=\sum_{m\in\mathbb{N}^*}J_m\sum_{a\in\mathbb{N}^{\mathbb{N}^*}}B_{a00}^{(m)}\mathcal{M}_{a00}.
\end{equation}
The solutions of the homological equations (\ref{041}) are given by
\begin{equation}\label{044}
F_{akk'}=\frac{B_{akk'}}{\sum_{n\in\mathbb{N}^*}(k_n-k^{'}_n){\lambda}_n},
\end{equation}
and
\begin{equation}\label{045}
F_{akk'}^{(m)}=\frac{B_{akk'}^{(m)}}{\sum_{n\in\mathbb{N}^*}(k_n-k^{'}_n)\lambda_n}.
\end{equation}
The new Hamiltonian ${H}_{+}$ has the form
\begin{eqnarray}
H_{+}\nonumber&=&H\circ\Phi\\
&=&\nonumber N+\{N,F\}+R_0+R_1\\
&&\nonumber+\int_{0}^1\{(1-t)\{N,F\}+R_0+R_1,F\}\circ X_F^{t}\ \mathrm{d}{t}
+\nonumber R_2\circ X_F^1\\
&=&\label{046}N_++R_+,
\end{eqnarray}
where
\begin{equation}\label{047}
N_+=N+[R_0]+[R_1],
\end{equation}
and
\begin{equation}\label{048}
R_+=\int_{0}^1\{(1-t)\{N,F\}+R_0+R_1,F\}\circ X_F^{t}\ \mathrm{d} t+R_2\circ X_F^1.
\end{equation}
\subsection{The solvability of the homological equations (\ref{041})}In this subsection, we will estimate
the solutions of the homological equations (\ref{041}). To this end, we define the new norm for the Hamiltonian ${R}$ as follows:
\begin{eqnarray}\label{049}
||{R}||_{\rho}^{+}=\max\left\{||R_0||_{\rho}^{+},||R_1||_{\rho}^{+}|,||R_2||_{\rho}^{+}\right\},
\end{eqnarray}
where
\begin{eqnarray}
\label{050}&&||R_0||_{\rho}^{+}=\sup_{a,k,k'\in\mathbb{N}^{\mathbb{N}^*}}\frac{\left(\prod_{n\in\mathbb{N}^*}n^{\frac{1}2\left(2a_n+k_n+k_n'\right)}\right)\left|B_{akk'}\right|}{e^{\rho(\sum_{n\in\mathbb{N}^*}(2a_n+k_n+k_n')n^{\theta}-2n_1^{\theta})}},\\
\label{051}&&||R_1||_{\rho}^{+}=\sup_{a,k,k'\in\mathbb{N}^{\mathbb{N}^*}\atop m\in\mathbb{N}^*}\frac{\left(\prod_{n\in\mathbb{N}^*}n^{\frac12\left(2a_n+k_n+k_n'\right)}\right)\left|mB^{(m)}_{akk'}\right|}{e^{\rho(\sum_{n\in\mathbb{N}^*}(2a_n+k_n+k_n')n^{\theta}+2m^{\theta}-2n_1^{\theta})}},\\
\label{052}&&||R_2||_{\rho}^{+}=\sup_{a,k,k'\in\mathbb{N}^{\mathbb{N}^*}\atop
m_1,m_2\in\mathbb{N}^*}\frac{\left(\prod_{n\in\mathbb{N}^*}n^{\frac12\left(2a_n+k_n+k_n'\right)}\right)\left|m_1m_2B^{(m_1,m_2)}_{akk'}\right|}{e^{\rho(\sum_{n\in\mathbb{N}^*}(2a_n+k_n+k_n')n^{\theta}
+2m_1^{\theta}+2m_2^{\theta}-2n_1^{\theta})}}.
\end{eqnarray}
Moreover, one has the following estimates:
\begin{lem}\label{N2}
Given any $ \delta,\rho>0,$ one has
\begin{equation}\label{053}
||R||_{\rho+\delta}^{+}\leq\left(\frac{1}{\delta}\right)^{ C(\theta)\delta^{-\frac{1}{\theta}}}||R||_{\rho}
\end{equation}
and
\begin{equation}\label{054}
||R||_{\rho+\delta}\leq\frac{C(\theta)}{\delta^2}||R||_{\rho}^{+},
\end{equation}
where $C(\theta)$ is a  positive constant depending on $\theta$ only.
\end{lem}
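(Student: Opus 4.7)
The plan is to prove the two inequalities separately by comparing coefficients in the two representations of $R$. In one direction I re-expand the pieces $R_0, R_1, R_2$ back into the $\mathcal{M}_{akk'}$ basis; in the other I expand $\mathcal{M}_{akk'}$ using $z_m\bar z_m = I_m(0) + J_m$ at each common-support index to manufacture the $R_i$ pieces.

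For the easier bound \eqref{054}, assuming $\|R\|_\rho^+$ controls the three families of coefficients, I fix an output monomial $\mathcal{M}_{a,k,k'}$ and expand $J_m = z_m\bar z_m - I_m(0)$ and $J_{m_1}J_{m_2}$. The contributions to $B_{akk'}$ are: one from $R_0$ (when $\mathrm{supp}(k)\cap\mathrm{supp}(k')=\emptyset$), a finite list from $R_1$ indexed by $m\in\mathrm{supp}(a)\cup(\mathrm{supp}(k)\cap\mathrm{supp}(k'))$, and a finite list from $R_2$ indexed by $(m_1,m_2)$. A short calculation shows that the weights $m^{-1}$ and $e^{+2\rho m^\theta}$ built into $\|R_1\|_\rho^+$ exactly cancel the shifts in $\prod n^{1/2}$ and $\sum n^\theta$ produced by peeling off a $J_m$ factor, and similarly for $R_2$, so each individual contribution reduces to $\|R\|_\rho^+\cdot e^{\rho(\sum(2a_n+k_n+k_n')n^\theta-2\widetilde n_1^\theta)}/\prod n^{(2a_n+k_n+k_n')/2}$, up to a small shift in $\widetilde n_1$. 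Passing from exponent $\rho$ to $\rho+\delta$ produces the extra factor $e^{-\delta(\sum(2a_n+k_n+k_n')n^\theta-2n_1^\theta)}$; a direct maximization of $|\mathrm{supp}|^2\cdot e^{-\delta(\cdots)}$ over the support configuration then yields the claimed $C(\theta)/\delta^2$.

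For the harder bound \eqref{053}, I would expand each monomial $\mathcal{M}_{akk'}$ at the common support by setting $c_n=\min(k_n,k_n')$ and applying the binomial identity
\begin{equation*}
\prod_n(z_n\bar z_n)^{c_n}=\sum_{0\le b\le c}\binom{c}{b}\prod_n I_n(0)^{c_n-b_n}J_n^{b_n}.
\end{equation*}
Grouping terms with $|b|=0$, $|b|=1$, and $|b|\ge 2$ produces the decomposition $R=R_0+R_1+R_2$; for $|b|\ge 3$ the extra $J$-factors are expanded back via $J_m=z_m\bar z_m-I_m(0)$ and absorbed into the inner monomial of the $R_2$ term. The resulting coefficients in $R_0,R_1,R_2$ are finite sums of the original $B_{akk'}$ weighted by $\binom{c}{b}$, and each is bounded by $\|R\|_\rho$ times a combinatorial count of the form $\prod_n(1+a_n+c_n)$.

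The main obstacle is controlling this combinatorial count against the improved weight $e^{-\delta(\sum(2a_n+k_n+k_n')n^\theta-2n_1^\theta)}$ available in the $\rho+\delta$-norm. I plan to split the product over indices into the regimes $n\ge(1/\delta)^{1/\theta}$ and $n<(1/\delta)^{1/\theta}$. In the first regime, $\log(1+a_n)\le a_n\le \delta a_n n^\theta$ is absorbed by the weight at no cost. In the second regime, there are at most $(1/\delta)^{1/\theta}$ indices, and each contributes a loss of at most $O(\log(1/\delta))$; accumulated across all such indices this produces the super-exponential factor $(1/\delta)^{C(\theta)\delta^{-1/\theta}}$. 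This regime-splitting is parallel to the arguments of Bourgain \cite{BJFA2005} and Cong-Liu-Shi-Yuan \cite{CLSY}, and the technical core of the present lemma is to adapt it to the three-piece norm $\|\cdot\|_\rho^+$ built around the $J_m$ factors.
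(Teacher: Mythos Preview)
Your proposal is correct and follows essentially the same route as the paper: for \eqref{053} both you and the paper expand $(z_n\bar z_n)^{\min(k_n,k_n')}$ as powers of $I_n(0)+J_n$, group by $J$-degree into $R_0,R_1,R_2$, and bound the resulting combinatorial factor $\prod_n(1+a_n)$ against the $\delta$-slack via exactly the regime-splitting estimate you describe (the paper simply quotes (7.37) in \cite{CLSY}); for \eqref{054} both reverse-expand $J_m=z_m\bar z_m-I_m(0)$, observe that the number of contributing terms is at most $\bigl(\sum_n(a_n+b_n)\bigr)^2\le\bigl(2\sum_{i\ge3}n_i^\theta\bigr)^2$, and optimize against $e^{-(2-2^\theta)\delta\sum_{i\ge3}n_i^\theta}$ to get the $C(\theta)/\delta^2$ factor. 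One small imprecision: in your second regime the bound ``each index contributes $O(\log(1/\delta))$'' is not automatic (since $a_n$ can be large for small $n$) but follows after first absorbing part of the exponential weight, i.e.\ by maximizing $\log(1+a_n)-c\delta a_n n^\theta$ over $a_n\ge0$.
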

\begin{proof}
Firstly, we will prove the inequality (\ref{053}).
Write $\mathcal{M}_{akk'}$ in the form of
\begin{equation*}
\mathcal{M}_{akk'}=\mathcal{M}_{abll'}=\prod_{n\in\mathbb{N}^*}I_n(0)^{a_n}I_n^{b_n}z_n^{l_n}{\bar z_n}^{l_n'},
\end{equation*}
where
\begin{equation*}
b_n=k_n\wedge k_n',\quad l_n=k_n-b_n,\quad l_n'=k_n'-b_n
\end{equation*}
and
$l_nl_n'=0$ for all $n$.

Express the term
\begin{equation*}
\prod_{n\in\mathbb{N}^*}I_n^{b_n}=\prod_{n\in\mathbb{N}^*}(I_n(0)+J_n)^{b_n}
\end{equation*}by the monomials of the form
\begin{equation*}
\prod_{n\in\mathbb{N}^*}I_n(0)^{b_n},
\end{equation*}
\begin{equation*}
\sum_{m,b_m\geq 1}\left(I_m(0)^{b_m-1}J_m\right)\left(\prod_{n\neq m}I_n(0)^{b_n}\right),
\end{equation*}
\begin{equation*}
\sum_{m,b_m\geq2\atop
r\leq b_m-2}\left(\prod_{n< m}I_n(0)^{b_n}\right)\left(I_m(0)^{r}J_m^2I_m^{b_m-r-2}\right)\left(\prod_{n> m}I_n^{b_n}\right),
\end{equation*}
and
\begin{eqnarray*}
&&\sum_{m_1< m_2,b_{m_1},b_{m_2}\geq 1\atop
r\leq b_{m_2}-1}\left(\prod_{n< m_1}I_n(0)^{b_n}\right)\left(I_{m_1}(0)^{b_{m_1}-1}J_{m_1}\right)
\\
&&\nonumber\times\left(\prod_{m_1<n< m_2}I_n(0)^{b_n}\right)\left(I_{m_2}(0)^{r}J_{m_2}I_{m_2}^{b_{m_2}-r-1}\right)
\left(\prod_{n> m_2}I_n^{b_n}\right).
\end{eqnarray*}

Now we will estimate the bounds for the coefficients respectively.

 Consider the term
$\mathcal{M}_{akk'}=\prod_{n\in\mathbb{N}^*}I_n(0)^{a_n}z_n^{k_n}\bar z_n^{k_n'}$ with fixed $a,k,k'$ satisfying $k_nk_n'=0$ for all $n$.
It is easy to see that $\mathcal{M}_{akk'}$ comes from some parts of the terms $\mathcal{M}_{\alpha\kappa\kappa'}$ with no assumption for $\kappa$ and $\kappa'$. For any given $n$ one has
\begin{equation*}
I_n(0)^{a_n}z_n^{k_n}\bar z_n^{k_n'}=\sum_{\beta_n=\kappa_n\wedge \kappa_n'}I_n(0)^{\alpha_n+\beta_n}z_n^{\kappa_n-\beta_n}\bar z_n^{\kappa_n'-\beta_n},
\end{equation*}
where
\begin{equation}\label{055}
\alpha_n+\beta_n=a_n,
\end{equation}
and
\begin{equation}\label{056}
\kappa_n-\beta_n=k_n,\qquad \kappa_n'-\beta_n=k_n'.
\end{equation}
Hence one has
\begin{equation}\label{012902}
2a_n+k_n+k_n'=2(\alpha_n+\beta_n)+(\kappa_n-\beta_n)+(\kappa_n'-\beta_n)=2\alpha_n+\kappa_n+\kappa_n'.
\end{equation}
Moreover, if $0\leq\alpha_n\leq a_n$ is chosen, so $\beta_n,k_n,k_n'$ are determined.
On the other hand,
\begin{eqnarray}
&&\nonumber\left|\left(\prod_{n\in\mathbb{N}^*}n^{\frac12\left(2\alpha_n+\kappa_n+\kappa_n'\right)}\right)B_{\alpha\kappa\kappa'}\right|\\
&\leq&\nonumber ||R||_{\rho}e^{\rho\left(\sum_{n\in\mathbb{N}^*}(2\alpha_n+\kappa_n+\kappa_n')n^{\theta}-2\nu_1^{\theta}\right)}\\
&=&\nonumber||R||_{\rho}e^{\rho\left(\sum_{n\in\mathbb{N}^*}(2a_n+k_n+k_n')n^{\theta}-2n_1^{\theta}\right)},
\end{eqnarray}
where the last equality is based on (\ref{012902}) and
\begin{equation*}
\nu_1=\max \mbox{supp}\ \alpha+\kappa+\kappa'.
\end{equation*}
Hence,
\begin{equation}\label{057}
\left|\left(\prod_{n\in\mathbb{N}^*}n^{\frac12\left(2a_n+k_n+k_n'\right)}\right)B_{akk'}\right|\leq||R||_{\rho}e^{\rho\left(\sum_{n\in\mathbb{N}^*}(2a_n+k_n+k_n')n^{\theta}-2n_1^{\theta}\right)}\left(\prod_{n\in\mathbb{N}^*}(1+a_n)\right).
\end{equation}
In view of (\ref{050}) and (\ref{057}), we have
\begin{eqnarray}
||R_0||_{\rho+\delta}^{+}\nonumber
&\leq&||R||_{\rho}e^{-\delta\left(\sum_{n\in\mathbb{N}^*}(2a_n+k_n+k_n')n^{\theta}
-2n_1^{\theta}\right)}\left(\prod_{n\in\mathbb{N}^*}(1+a_n)\right)\\
&\leq&\label{058}||R||_{\rho}\left(\frac{1}{\delta}\right)^{ C(\theta)\delta^{-\frac{1}{\theta}}},
\end{eqnarray}
where the last inequality is based on (7.37) in \cite{CLSY} and $C(\theta)$ is a positive constant depending only on $\theta$.

Next consider the term
$J_m\mathcal{M}_{akk'}=J_m\prod_{n\in\mathbb{N}^*}I_n(0)^{a_n}z_n^{k_n}\bar z_n^{k_n'}$ with fixed $a,k,k'$ satisfying $k_nk_n'=0$ for all $n$.
The term $J_m\mathcal{M}_{akk'}$ also comes from some parts of the terms $\mathcal{M}_{\alpha\kappa\kappa'}$ with no assumption for $\kappa$ and $\kappa'$.

For any given $n\neq m$ one has
\begin{equation*}
I_n(0)^{a_n}z_n^{k_n}\bar z_n^{k_n'}=\sum_{\beta_n=\kappa_n\wedge \kappa_n'}I_n(0)^{\alpha_n+\beta_n}z_n^{\kappa_n-\beta_n}\bar z_n^{\kappa_n'-\beta_n}.
\end{equation*}
Following (\ref{055}), (\ref{056}) and (\ref{012902}), one has
\begin{equation*}
\alpha_n+\beta_n=a_n,
\end{equation*}
\begin{equation*}
\kappa_n-\beta_n=k_n,\qquad \kappa_n'-\beta_n=k_n',
\end{equation*}
and
\begin{equation*}
2\alpha_n+\kappa_n+\kappa_n'=2a_n+k_n+k_n'.
\end{equation*}
Moreover, if $0\leq\alpha_n\leq a_n$ is chosen, so $\beta_n,k_n,k_n'$ are determined.

For any given $n=m$ one has
\begin{equation*}
J_mI_m(0)^{a_m}q_m^{k_m}\bar q_m^{k_m'}=\sum_{\beta_m=\kappa_m\wedge \kappa_m'}\beta_mJ_mI_m(0)^{\alpha_m+\beta_m-1}q_m^{\kappa_m-\beta_m}\bar q_m^{\kappa_m'-\beta_m}.
\end{equation*}
Hence,
\begin{equation}\label{022101}
\alpha_m+\beta_m-1=a_m,
\end{equation}
\begin{equation*}
\kappa_m-\beta_m=k_m,\qquad \kappa_m'-\beta_m=k_m',
\end{equation*}
and
\begin{equation}\label{012903}
2\alpha_m+\kappa_m+\kappa_m'=2a_m+k_m+k_m'+2.
\end{equation}
Moreover, if $0\leq\alpha_m\leq a_m$ is chosen (noting that $\alpha_m=a_m+1\Leftrightarrow \beta_m=0$), so $\beta_m,k_m,k_m'$ are determined.
On the other hand,
\begin{eqnarray}
&&\nonumber\left|\left(\prod_{n\in\mathbb{N}^*}n^{\frac12\left(2\alpha_n+\kappa_n+\kappa_n'\right)}\right)B_{\alpha\kappa\kappa'}\right|\\
&\leq&\nonumber ||R||_{\rho}e^{\rho\left(\sum_{n\in\mathbb{N}^*}(2\alpha_n+\kappa_n+\kappa_n')n^{\theta}-2\nu_1^{\theta}\right)}\\
&\leq&\nonumber||R||_{\rho}e^{\rho\left(\sum_{n\in\mathbb{N}^*}(2a_n+k_n+k_n')n^{\theta}+2m^{\theta}-2n_1^{\theta}\right)},
\end{eqnarray}
where the last equality is based on (\ref{012903}) and $n_1\leq \nu_1$.
Then
\begin{eqnarray}
&&\nonumber\left|\left(\prod_{n\in\mathbb{N}^*}n^{\frac12\left(2a_n+k_n+k_n'\right)}\right)mB_{akk'}^{(m)}\right|\\
&\leq& \nonumber ||R||_{\rho}e^{\rho\left(\sum_{n\in\mathbb{N}^*}(2a_n+k_n+k_n')n^{\theta}+2m^{\theta}-2n_1^{\theta}\right)}\left(\prod_{n\in\mathbb{N}^*}(1+a_n)\right)\beta_m\\
&\leq& \nonumber ||R||_{\rho}e^{\rho\left(\sum_{n\in\mathbb{N}^*}(2a_n+k_n+k_n')n^{\theta}+2m^{\theta}-2n_1^{\theta}\right)}\left(\prod_{n\in\mathbb{N}^*}(1+a_n)\right)\left(1+a_m\right)\\
&&\nonumber \mbox{(based on (\ref{022101}))}\\
&\leq&||R||_{\rho}\left(\frac{1}{\delta}\right)^{ C(\theta)\delta^{-\frac{1}{\theta}}},
\end{eqnarray}
where the last inequality is based on (7.37) in \cite{CLSY} and $C(\theta)$ is a positive constant depending only on $\theta$. In view of (\ref{051}), one has
\begin{equation}\label{058.}
||R_1||_{\rho+\delta}^{+}
\leq||R||_{\rho}\left(\frac{1}{\delta}\right)^{ C(\theta)\delta^{-\frac{1}{\theta}}}.
\end{equation}
Similarly, one has
\begin{eqnarray}\label{058..}
||R_2||_{\rho+\delta}^{+}\leq\left(\frac{1}{\delta}\right)^{C(\theta)\delta^{-\frac{1}{\theta}}}||R||_{\rho}.
\end{eqnarray}
In view of (\ref{049}), (\ref{058}), (\ref{058.}) and (\ref{058..}), we finish the proof of (\ref{053}).

On the other hand, the coefficient of $\mathcal{M}_{abll'}$ increases by at most a factor $$\left(\sum_{n\in\mathbb{N}^*}(a_n+b_n)\right)^2,$$ then one has
\begin{eqnarray}
\nonumber||R||_{\rho+\delta}
&\leq&\nonumber||R||_{\rho}^{+}\left(\sum_{n\in\mathbb{N}^*}(a_n+b_n)\right)^2
e^{-\delta\left(\sum_{n\in\mathbb{N}^*}(2a_n+k_n+k_n')n^{\theta}-2n_1^{\theta}\right)}\\
\nonumber&\leq&||R||_{\rho}^{+}\left(2\sum_{i\geq 3}n_i^{\theta}\right)^2 e^{-\delta(2-2^\theta)\sum_{i\geq3}n_i^{\theta}}\\
\nonumber &\leq&\frac{4}{(2-2^{\theta})^2\delta^2}||R||_{\rho}^{+},
\end{eqnarray}
where the last inequality is based on Lemma 7.5 in \cite{CLSY} with $p=2$, and we finish the proof of (\ref{054}).
\end{proof}

\begin{lem}\label{N3}
Assume $\omega=(\omega_n)_{n\in\mathbb{N}^*}$ with $\omega_n=\lambda_n-n$ satisfies the nonresonant conditions (\ref{005}) and (\ref{005.}). Then for any $\rho>0,0<\delta\ll1$ (depending only on $\theta$), the solutions of the homological equations (\ref{041}), which are given by (\ref{044}) and (\ref{045}), satisfy
\begin{eqnarray}\label{061}
\left|\left|{F}\right|\right|_{\rho+\delta}^{+}\leq \frac{1}{\gamma^3}\cdot e^{C(\theta)\delta^{-\frac5\theta}}||{{R}}||_{\rho}^{+},
\end{eqnarray}
 where $C(\theta)$ is a positive constant depending on $\theta$ only.
\end{lem}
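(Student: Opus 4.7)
My plan is to estimate the coefficients $F_{akk'}$ and $F_{akk'}^{(m)}$ termwise, bounding the numerators from above by $\|R\|_\rho^{+}$ and the small divisor $D:=\sum_n(k_n-k_n')\lambda_n$ from below via the nonresonant conditions. Since $\lambda_n=n+\omega_n$ and $\sum_n(k_n-k_n')n\in\mathbb{Z}$, I get the key reduction $|D|\geq\bigl\|\sum_n(k_n-k_n')\omega_n\bigr\|$, so (\ref{005}) and (\ref{005.}) apply directly. I split into two cases: Case A, in which $n_3^*<n_2^*$ and $\sum_n|k_n-k_n'|\geq 3$ so that the stronger condition (\ref{005.}) is available, and Case B, in which only (\ref{005}) is usable.

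Substituting the divisor lower bound and the $\|R\|_\rho^{+}$-upper bound for $|B_{akk'}|$ into (\ref{044}), the factor $e^{\rho(\sum(2a_n+k_n+k_n')n^\theta-2n_1^\theta)}$ cancels part of the $(\rho+\delta)$-exponent in $\|F\|_{\rho+\delta}^{+}$, leaving a residual $e^{-\delta(\sum(2a_n+k_n+k_n')n^\theta-2n_1^\theta)}$. By Lemma \ref{020801} this dominates $e^{-\delta(2-2^\theta)\sum_{i\geq 3}n_i^\theta}$. What remains is to dominate the polynomial product coming from the nonresonant bound,
\[
\prod_n\bigl(1+(k_n-k_n')^2n^5\bigr)\quad\text{or}\quad\prod_{n\neq n_1^*,n_2^*}\bigl(1+(k_n-k_n')^2n^6\bigr)^4,
\]
by $e^{C(\theta)\delta^{-5/\theta}}\cdot e^{\delta(2-2^\theta)\sum_{i\geq 3}n_i^\theta}$, the prefactors $\gamma^{-1}$ and $16\gamma^{-3}$ being absorbed into the single $\gamma^{-3}$ in (\ref{061}).

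The core technical step is this absorption. Taking logarithms, each factor contributes $O(\log n)+O(\log|k_n-k_n'|)$; the elementary inequality $\log(1+x^c)\leq\eta\,x^\theta+C(\eta,c,\theta)$ with $\eta\sim\delta$ converts polynomial growth in $n$ into $\eta n^\theta$ at the additive cost $C(\eta,c,\theta)\sim\eta^{-c/\theta}$. Summing over the at-most-$j=\sum_n|k_n-k_n'|$ nonzero coordinates and using $n_i\geq n_i^*$ from Remark \ref{013001.}, the sum $\eta\sum n^\theta$ is absorbed into $\delta(2-2^\theta)\sum_{i\geq 3}n_i^\theta$, the $O(j)$ contribution from $\log|k_n-k_n'|$ is itself absorbed via $j\leq 2+\sum_{i\geq 3}n_i^\theta$, and the accumulated additive constants yield $e^{C(\theta)\delta^{-5/\theta}}$, the exponent $5/\theta$ being dictated by the largest polynomial factor present in the nonresonant bounds. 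The main obstacle I anticipate is the bookkeeping in Case A: the product in (\ref{005.}) omits both $n_1^*$ and $n_2^*$, yet the norm $\|F\|^{+}$ subtracts only $2n_1^\theta$ from the exponent, so I must exploit the strict gap $n_3^*<n_2^*$ together with $n_i\geq n_i^*$ to argue that $n_2^\theta$ is nonetheless absorbed into the Lemma \ref{020801} gain, which starts at $i=3$ in the $(n_i)$-rearrangement. The argument for $F_1$ via (\ref{045}) is entirely parallel: the extra factor $m$ and the weight $e^{2\rho m^\theta}$ in $\|R_1\|^{+}_\rho$ transfer unchanged to $\|F_1\|^{+}_{\rho+\delta}$ since $D$ does not involve $m$; taking the maximum over the two bounds yields (\ref{061}).
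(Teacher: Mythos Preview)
Your case split and the overall structure are right, but you have inverted which case is delicate, and the resulting gap is genuine.

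In your Case~A (condition~(\ref{005.}) applies, $n_3^*<n_2^*$, $j\geq 3$) there is no obstacle at all: the product $\prod_{n\neq n_1^*,n_2^*}(1+(k_n-k_n')^2n^6)^4$ already excludes $n_1^*$ and $n_2^*$, so after your log--to--$n^\theta$ conversion the resulting sum is $\lesssim\sum_{i\geq 3}(n_i^*)^\theta$, which by Remark~\ref{013001.} is $\leq\sum_{i\geq 3}n_i^\theta$. This is exactly the quantity that Lemma~\ref{020801} provides on the exponential side, and the absorption goes through directly (this is the content of Lemma~\ref{031902} in the paper). No use of the strict gap $n_3^*<n_2^*$ is needed here.

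The real difficulty is in your Case~B, and your sketch does not address it. When only (\ref{005}) is available the product runs over \emph{all} $n$, so after conversion you are left with a term of size $(n_1^*)^\theta+(n_2^*)^\theta$ that must be absorbed by $\delta(2-2^\theta)\sum_{i\geq 3}n_i^\theta$. Remark~\ref{013001.} gives only $n_i^*\leq n_i$; it does \emph{not} bound $(n_1^*)^\theta$ by $\sum_{i\geq 3}n_i^\theta$, and in general no such bound holds. The paper closes this gap with two extra observations you are missing: first, if $n_1^*\geq 10\sum_{i\geq 2}n_i^*$ then $|D|\geq 1$ and there is no small divisor at all; second, in the complementary range $n_1^*<10\sum_{i\geq 2}n_i^*$, the hypothesis $n_3^*=n_2^*$ (which is precisely what puts you in Case~B when $j\geq 3$) gives $n_2^*=n_3^*\leq\sum_{i\geq 3}n_i^*$ and hence $n_1^*\leq C\sum_{i\geq 3}n_i^*$, from which $(n_1^*)^\theta+(n_2^*)^\theta\leq C(\theta)\sum_{i\geq 3}n_i^\theta$ follows by subadditivity and Remark~\ref{013001.}. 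Without this dichotomy your absorption step fails: for instance a monomial with $n_1^*$ large and all other $n_i^*$ small would produce a factor $(n_1^*)^5$ that the exponential gain cannot swallow.
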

\begin{proof}

We distinguish two cases:\\

\textbf{Case. 1.} $n_3^*<n_2^*$. \\
Since $$\sum_{n\in\mathbb{N}^*}(k_n-k'_n)n\in\mathbb{Z},$$
the nonresonant conditions (\ref{005.}) implies
\begin{equation}\label{063.}
\left|\sum_{n\in\mathbb{N}^*}(k_n-k'_n)\lambda_n\right|\geq
\frac{\gamma^3}{16}\prod_{n\in\mathbb{N}^*\atop n\neq n^*_1,n_2^*}\left(\frac{1}{1+{(k_n-k'_n)^2}n^6}\right)^4.
\end{equation}
Hence,
\begin{eqnarray*}
\nonumber&&{\left(\prod_{n\in\mathbb{N}^*}n^{\frac12\left(2a_n+k_n+k_n'\right)}\right)\left|{F}_{akk'}\right|}e^{-(\rho+\delta)\left(\sum_{n\in\mathbb{N}^*}2a_n+k_n+k'_n)n^{\theta}-2n_1^{\theta}\right)}\\
&=&\nonumber\frac{\left(\prod_{n\in\mathbb{N}^*}n^{\frac12\left(2a_n+k_n+k_n'\right)}\right)\left|{B}_{akk'}\right|}{|\sum_{n\in\mathbb{N}^*}(k_n-k'_n)\lambda_n|}\times e^{-(\rho+\delta)(\sum_{n\in\mathbb{N}^*}(2a_n+k_n+k'_n)n^{\theta}-2n_1^{\theta})}\\
&&\nonumber\mbox{(in view of (\ref{044}))}\\
\nonumber&\leq& \frac{16}{\gamma^{3}}||{R_0}||_{\rho}^{+} \left(\prod_{n\in\mathbb{N}^*\atop n\neq n_1^*,n_2^*}\left({1+(k_n-k'_n)^2n^6}\right)^4\right)\times e^{-\delta\left(\sum_{n\in\mathbb{N}^*}(2a_n+k_n+k'_n)n^{\theta}-2n_1^{\theta}\right)}\\
\nonumber&&
 \mbox{(in view of (\ref{050}) and (\ref{063.}))}\\
&\leq&\label{064.}\frac{1}{\gamma^3}\cdot e^{C_1(\theta)\delta^{-\frac5\theta}} ||{R_0}||_{\rho}^+,
\end{eqnarray*}
where the last inequality is based on Lemma \ref{031902} and $ C_1(\theta)$ is a positive constant depending on $\theta$ only, which finishes the proof of \begin{eqnarray}\label{031903}
\left|\left|{F}_0\right|\right|_{\rho+\delta}^{+}\leq \frac{1}{\gamma^3}\cdot e^{C(\theta)\delta^{-\frac5\theta}}||{{R_0}}||_{\rho}^{+}
\end{eqnarray}
in \textbf{Case. 1}.

$\textbf{Case. 2.}$ $n_3^*=n_2^*$.

If $$n_1^*\geq 10\sum_{i\geq2}n_i^*,$$ then one has
\begin{equation*}
\left|\sum_{n\in\mathbb{N}^*}(k_n-k'_n)\lambda_n\right|\geq
1,
\end{equation*}where there is no small divisor. Hence we always assume that
\begin{equation*}
n_1^*< 10\sum_{i\geq2}n_i^*.
\end{equation*}
In view of $n_3^*=n_2^*$, then one has
\begin{equation*}
n_1^*\leq 11\sum_{i\geq 3}n_i^*
\end{equation*}
and
\begin{equation*}
\left(n_1^*\right)^{{\theta}}\leq 11^{{\theta}}\sum_{i\geq 3}\left(n_i^*\right)^{{\theta}}.
\end{equation*}
Moreover,
\begin{eqnarray}
\nonumber\sum_{i\geq 1}\left(n_i^*\right)^{\theta}&\leq& \left(11^{\theta}+2\right)\sum_{i\geq 3}\left(n_i^*\right)^{\theta}\\
&\leq& \label{013002}{\left(11^{\theta}+2\right)}\sum_{i\geq 3}n_i^{\theta}\qquad (\mbox{in view of Remark \ref{013001.}})\\
&\leq&\frac{11^{\theta}+2}{2-2^{\theta}}\left(\sum_{n\in\mathbb{N}^*}(2a_n+k_n+k'_n)n^{\theta}-2n_1^{\theta}\right),
\end{eqnarray}
where the last inequality is based on (\ref{001.}).

Since $$\sum_{n\in\mathbb{N}^*}(k_n-k'_n)n\in\mathbb{Z},$$
the nonresonant conditions (\ref{005}) implies
\begin{equation}\label{063}
\left|\sum_{n\in\mathbb{N}^*}(k_n-k'_n)\lambda_n\right|\geq
\gamma\prod_{n\in\mathbb{N}^*}\frac{1}{1+{(k_n-k'_n)^2}n^5}.
\end{equation}
Following the proof of (\ref{064.}) one has
\begin{eqnarray*}
\nonumber&&{\left(\prod_{n\in\mathbb{N}^*}n^{\frac12\left(2a_n+k_n+k_n'\right)}\right)\left|{F}_{akk'}\right|}e^{-(\rho+\delta)\left(\sum_{n\in\mathbb{N}^*}\left(2a_n+k_n+k'_n\right)n^{\theta}-2n_1^{\theta}\right)}\\
&\leq&\label{064}\frac{1}{\gamma}\cdot e^{C_2(\theta)\delta^{-\frac5\theta}} ||{R_0}||_{\rho}^+,
\end{eqnarray*}
where $C_2(\theta)$ is a positive constant depending on $\theta$ only, which finishes the proof of \begin{eqnarray}\label{031904}
\left|\left|{F}_0\right|\right|_{\rho+\delta}^{+}\leq \frac{1}{\gamma^3}\cdot e^{C(\theta)\delta^{-\frac5\theta}}||{{R_0}}||_{\rho}^{+}
\end{eqnarray}
in \textbf{Case. 2}.

Similarly, one can prove \begin{eqnarray}\label{031905}
\left|\left|{F}_1\right|\right|_{\rho+\delta}^{+}\leq \frac{1}{\gamma^3}\cdot e^{C(\theta)\delta^{-\frac5\theta}}||{{R_1}}||_{\rho}^{+}.
\end{eqnarray}

In view of (\ref{031903}), (\ref{031904}) and (\ref{031905}), we finish the proof of (\ref{061}).
\end{proof}

\subsection{The new perturbation $R_+$ and the new normal form $N_+$}

Recall the new term $R_+$ is given by (\ref{048}) and
write
\begin{equation}\label{069}
R_+=R_{0+}+R_{1+}+R_{2+}.
\end{equation}
Now we will estimate $R_{i+}$ for $i=0,1,2$ respectively. To this end, we give the following estimate:
\begin{lem}\label{012901}(\textbf{Poisson Bracket})
Let $\theta\in(0,1),\rho>0$ and $0<\delta_1,\delta_2\ll1$ (depending on $\theta,\rho$). Then one has
\begin{equation}\label{042704}
\left|\left|\left\{H_1,H_2\right\}\right|\right|_\rho\leq \frac{1}{\delta_2}\left(\frac{1}{\delta_1}\right)^{C({\theta}){\delta_1^{-\frac{1}{\theta}}}}\left|\left|H_1\right|\right|_{\rho-\delta_1}||H_2||_{\rho-\delta_2},
\end{equation}
where $C(\theta)$ is a positive constant depending on $\theta$ only.
\end{lem}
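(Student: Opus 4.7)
The plan is to prove this Cauchy-type estimate by direct term-by-term computation in the monomial expansion. Writing $H_j=\sum B^{(j)}_{akk'}\mathcal{M}_{akk'}$, and using that $I_n(0)$ is a parameter so that differentiation acts only on $z_m,\bar z_m$, one checks
\[
\{\mathcal{M}_{akk'},\mathcal{M}_{a'll'}\}=\frac{1}{i}\sum_{n\in\mathbb{N}^*}(k_n l_n'-k_n' l_n)\,\mathcal{M}_{A,K-e_n,K'-e_n},
\]
where $A_m=a_m+a_m'$, $K_m=k_m+l_m$, $K'_m=k_m'+l_m'$, and $e_n$ is the standard basis vector. Thus the coefficient in $\{H_1,H_2\}$ of a fixed output monomial $\mathcal{M}_{\alpha\kappa\kappa'}$ is a sum over admissible decompositions $\bigl((a,k,k'),(a',l,l'),n\bigr)$ with $A=\alpha$, $K-e_n=\kappa$, $K'-e_n=\kappa'$.

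Two algebraic identities match the weight structure of \eqref{H00}: the product identity
\[
\prod_m m^{\frac12(2A_m+K_m+K_m'-2\delta_{mn})}=\frac{1}{n}\prod_m m^{\frac12(2a_m+k_m+k_m')}\prod_m m^{\frac12(2a_m'+l_m+l_m')},
\]
and the additive exponent identity $\sum_m(2A_m+K_m+K_m'-2\delta_{mn})m^\theta=\sigma_1+\sigma_2-2n^\theta$, where $\sigma_j$ denotes the degree sum $\sum_m(2a_m+k_m+k_m')m^\theta$ for the $j$th input. Since $(k_n l_n'-k_n' l_n)\neq 0$ forces $n$ to lie in the supports of both inputs, one has $n\leq\min(n_1^{(1)},n_1^{(2)})$ and $\tilde n_1\leq\max(n_1^{(1)},n_1^{(2)})$, whence
\[
-2\tilde n_1^\theta-2n^\theta+2n_1^{(1)\theta}+2n_1^{(2)\theta}\geq 0.
\]
Combined with the additive identity, this shows that the target exponential weight $e^{\rho(\sum(2A_m+K_m+K_m'-2\delta_{mn})m^\theta-2\tilde n_1^\theta)}$ for the output is dominated by the product of input exponential weights coming from $\|H_1\|_{\rho-\delta_1}$ and $\|H_2\|_{\rho-\delta_2}$, leaving the surplus damping $e^{-\delta_1(\sigma_1-2n_1^{(1)\theta})}\,e^{-\delta_2(\sigma_2-2n_1^{(2)\theta})}$, which by Lemma~\ref{020801} is at least $e^{-c(\theta)(\delta_1+\delta_2)\sum_{i\geq 3}(n_i^{(1)\theta}+n_i^{(2)\theta})}$.

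Finally, for each fixed output $(\alpha,\kappa,\kappa')$ and bracket index $n$, the number of admissible decompositions together with the algebraic factor $|k_n l_n'-k_n' l_n|$ is dominated by $\prod_m(1+\alpha_m)(2+\kappa_m)(2+\kappa_m')$. Applying estimate (7.37) of \cite{CLSY} to a fraction of the $\delta_1$-damping absorbs this polynomial at the cost of the factor $(1/\delta_1)^{C(\theta)\delta_1^{-1/\theta}}$, mirroring the derivation of Lemma~\ref{N2}. The remaining $1/\delta_2$ comes from the $n^{-1}$ loss in the product identity, tamed by the $\delta_2$-damping applied to the summand at index $n$. The main obstacle is precisely this coupled bookkeeping when comparing exponential weights: one must make careful simultaneous use of $n\leq\min(n_1^{(1)},n_1^{(2)})$, $\tilde n_1\leq\max(n_1^{(1)},n_1^{(2)})$, and Lemma~\ref{020801} applied to each input, so that the combinatorial overhead is charged entirely to the $\delta_1$-damping (producing $(1/\delta_1)^{C(\theta)\delta_1^{-1/\theta}}$), while only a single power of $1/\delta_2$ emerges from the Cauchy-type $n^{-1}$ loss on the $H_2$-side.
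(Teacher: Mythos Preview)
Your approach is essentially the paper's: expand the bracket in monomials, use the product and additive weight identities together with $n\le\min(n_1^{(1)},n_1^{(2)})$ and $\tilde n_1\le\max(n_1^{(1)},n_1^{(2)})$ to show the cross-term $e^{2\rho(j^\theta+\nu_1^\theta-n_1^\theta-N_1^\theta)}\le 1$, and reduce to a combinatorial sum handled via \cite{CLSY}. The only substantive deviation is in the last step: the paper simply discards the $1/n$ gain (bounding $\prod m^{\frac12(2\alpha_m+\kappa_m+\kappa_m')}\le\prod m^{\frac12(2a_m+k_m+k_m'+2A_m+K_m+K_m')}$) and invokes (4.8) of Lemma~4.1 in \cite{CLSY} for the whole sum $I$, whereas you try to sketch a direct counting argument. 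Your description of where the $1/\delta_2$ comes from is muddled --- the $n^{-1}$ in the product identity is a \emph{gain}, not a loss, and it is not what produces the $1/\delta_2$; in the actual estimate (4.8) of \cite{CLSY} the single power $1/\delta_2$ arises from summing the $H_2$-side decomposition against the damping $e^{-(2-2^\theta)\delta_2\sum_{i\ge3}N_i^\theta}$, while the tower $(1/\delta_1)^{C(\theta)\delta_1^{-1/\theta}}$ absorbs the remaining count via (7.37). Since your $1/n$ only helps, you could equally well drop it and cite (4.8) directly.
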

\begin{proof}
Let
\begin{equation*}
H_1=\sum_{a,k,k'\in\mathbb{N}^{\mathbb{N}^*}} b_{akk'}\mathcal{M}_{akk'}
\end{equation*}
and
\begin{equation*}
H_2=\sum_{A,K,K'\in\mathbb{N}^{\mathbb{N}^*}} B_{AKK'}\mathcal{M}_{AKK'}.
\end{equation*}
It follows easily that
\begin{equation*}
\{H_1,H_2\}=\sum_{a,k,k',A,K,K'\in\mathbb{N}^{\mathbb{N}^*}}b_{akk'}B_{AKK'}\{\mathcal{M}_{akk'},\mathcal{M}_{AKK'}\},
\end{equation*}
where
\begin{eqnarray*}
\{\mathcal{M}_{akk'},\mathcal{M}_{AKK'}\}
&=&-\textbf{i}\sum_{j\in\mathbb{N}^*}\left(\prod_{n\neq j}I_n(0)^{a_n+A_n}z_n^{k_n+K_n}\bar{z}_n^{k_n'+K_n'}\right)\\
&&\times\left((k_jK_j'-k_j'K_j)I_j(0)^{a_j+A_j}z_j^{k_j+K_j-1}\bar{z}_j^{k_j'+K_j'-1}\right).
\end{eqnarray*}
Then the coefficient of
\[
\mathcal{M}_{\alpha\kappa\kappa'} :=\prod_{n\in\mathbb{N}^*}I_n(0)^{\alpha_n}z_n^{\kappa_n}\bar{z}_n^{\kappa'_n}
\]
is given by
\begin{equation}\label{006*}
B_{\alpha\kappa\kappa'}= -{\textbf{i}}\sum_{j\in\mathbb{N}^*}\sum_{*}\sum_{**}(k_jK_j'-k_j'K_j)b_{akk'}B_{AKK'},
\end{equation}
where
\begin{equation*}
\sum_{*}=\sum_{a,A \atop a+A=\alpha},
\end{equation*}
and
\begin{equation}\label{020805}
\sum_{**}=\sum_{k,k',K,K'\atop \mbox{when}\ n\neq j, k_n+K_n=\kappa_n,k_n'+K_n'=\kappa_n';\mbox{when}\ n=j, k_n+K_n-1=\kappa_n,k_n'+K_n'-1=\kappa_n'}.
\end{equation}
In view of (\ref{H00}) in Definition \ref{020804}, one has
\begin{eqnarray}
\nonumber &&\left|\left(\prod_{n\in\mathbb{N}^*}n^{\frac12\left(2a_n+k_n+k_n'\right)}\right)b_{akk'}\right|\\
 \nonumber &\leq& ||H_1||_{\rho-\delta_1}e^{(\rho-\delta_1)\left(\sum_{n\in\mathbb{N}^*}(2a_n+k_n+k_n')n^{\theta}-2n_1^{\theta}\right)}\\
&=&||H_1||_{\rho-\delta_1}e^{\rho\left(\sum_{n\in\mathbb{N}^*}(2a_n+k_n+k_n')n^{\theta}-2 n_1^{\theta}\right)}
e^{-\delta_1\left(\sum_{n\in\mathbb{N}^*}(2a_n+k_n+k_n')n^{\theta}-2n_1^{\theta}\right)}\nonumber\\
\label{007}&\leq&||H_1||_{\rho-\delta_1}e^{\rho\left(\sum_{n\in\mathbb{N}^*}
(2a_n+k_n+k_n')n^{\theta}-2n_1^{\theta}\right)}e^{-(2-2^{\theta})\delta_1
\sum_{i\geq 3}n_i^{\theta}},
\end{eqnarray}
where the last inequality is based on (\ref{001.}) in Lemma \ref{0m}.

Similarly,
\begin{equation}
\left|\prod_{n\in\mathbb{N}^*}n^{\frac12\left(2A_n+K_n+K_n'\right)}B_{akk'}\right|\leq \label{008}||H_2||_{\rho-\delta_2}e^{\rho\left(\sum_{n\in\mathbb{N}^*}(2A_n+K_n+K_n')
n^{\theta}-2N_1^{\theta}\right)}e^{-(2-2^{\theta})\delta_2\sum_{i\geq 3}N_i^{\theta}}.
\end{equation}
Substituting (\ref{007}) and (\ref{008}) in (\ref{006*}) gives
\begin{eqnarray}
\label{009}&&\nonumber\left|\prod_{n\in{\mathbb{N}^*}}n^{\frac12\left(2\alpha_n+\kappa_n+\kappa_n'\right)}B_{\alpha\kappa\kappa'}\right|\\
&\leq&\nonumber\left|\prod_{n\in{\mathbb{N}^*}}n^{\frac12\left(2a_n+k_n+k_n'+2A_n+K_n+K_n'\right)}B_{\alpha\kappa\kappa'}\right|\\ &&\nonumber\mbox{(in view of $\alpha_n=a_n+A_n$ and (\ref{020805}))}\\
\nonumber&\leq&\nonumber||H_1||_{\rho-\delta_1}||H_2||_{\rho-\delta_2}\sum_{j\in\mathbb{N}^*}\sum_{*}\sum_{**}|k_jK_j'-k_j'K_j|\\
&&\times\nonumber e^{\rho\left(\sum_{n\in\mathbb{N}^*}(2a_n+k_n+k_n')n^{\theta}-2n_1^{\theta}
+\sum_{n\in\mathbb{N}^*}(2A_n+K_n+K_n')n^{\theta}-2N_1^{\theta}\right)}\\
\nonumber&&\nonumber\times e^{-(2-2^{\theta})\delta_1\sum_{i\geq 3}n_i^{\theta}}e^{-(2-2^{\theta})\delta_2\sum_{i\geq 3}N_i^{\theta}}\\
\nonumber&=&\nonumber||H_1||_{\rho-\delta_1}||H_2||_{\rho-\delta_2}\sum_{j\in\mathbb{N}^*}\sum_{*}\sum_{**}|k_jK_j'-k_j'K_j|\\
&&\times\nonumber e^{\rho\left(\sum_{n\in\mathbb{N}^*}(2\alpha_n+\kappa_n+\kappa_n')
n^{\theta}+2j^{\theta}\right)}e^{-2\rho n_1^{\theta}-2\rho N_1^{\theta}}\\
\nonumber&&\times \nonumber e^{-(2-2^{\theta})\delta_1\sum_{i\geq 3}n_i^{\theta}}e^{-(2-2^{\theta})\delta_2\sum_{i\geq 3}N_i^{\theta}}\\
&=&\nonumber||H_1||_{\rho-\delta_1}||H_2||_{\rho-\delta_2}
e^{\rho\left(\sum_{n\in\mathbb{N}^*}(2\alpha_n+\kappa_n+\kappa_n')n^{\theta}-2\nu_1^{\theta}\right)}\\
&&\nonumber\times\sum_{j\in\mathbb{N}^*}\sum_{*}\sum_{**}|k_jK_j'-k_j'K_j| e^{2\rho\left(j^{\theta}+\nu_1^{\theta}-n_1^{\theta}-N_1^{\theta}\right)}\\
&&\label{020806}\times e^{-(2-2^{\theta})\delta_1\sum_{i\geq 3}n_i^{\theta}}e^{-(2-2^{\theta})\delta_2\sum_{i\geq 3}N_i^{\theta}},
\end{eqnarray}
where
\begin{equation*}
\nu_1=\max\{n:\alpha_n+\kappa_n+\kappa_n'\neq0\}.
\end{equation*}
Following the proof of (4.8) in Lemma 4.1 in \cite{CLSY}, one has
\begin{equation}\label{020807}
I\leq \frac{1}{\delta_2}\left(\frac{1}{\delta_1}\right)^{C({\theta})
{\delta_1^{-\frac{1}{\theta}}}},
\end{equation}
where
\begin{eqnarray*}
I&=&\sum_{j}\sum_{*}\sum_{**}|k_jK_j'-k_j'K_j|
e^{2\rho\left(j^{\theta}+\nu_1^{\theta}-n_1^{\theta}-N_1^{\theta}\right)}\\
\nonumber&&\times e^{-(2-2^{\theta})\delta_1\sum_{i\geq 3}n_i^{\theta}}
e^{-(2-2^{\theta})\delta_2\sum_{i\geq 3}N_i^{\theta}}.
\end{eqnarray*}
Hence in view of (\ref{020806}) and (\ref{020807}), we finish the proof of (\ref{042704}).
\end{proof}
Based on Lemma \ref{012901} and following the proof of (4.54)-(4.56) in \cite{CLSY}, one has
\begin{eqnarray}
||R_{0+}||_{\rho+3\delta}^{+}
\label{0861} &\leq& \frac1{\gamma^3}\cdot e^{{\delta^{-\frac{10}{\theta}}}}\left(||R_0||_{\rho}^++||R_1||_{\rho}^+\right)\left(||R_0||_{\rho}^+
+{||R_1||_{\rho}^+}^2\right),\\
||R_{1+}||_{\rho+3\delta}^{+}
\label{0862}&\leq& \frac1{\gamma^3}\cdot e^{{\delta^{-\frac{10}{\theta}}}}\left(||R_0||_{\rho}^++{||R_1||_{\rho}^+}^2\right),\\
||R_{2+}||_{\rho+3\delta}^{+}
\label{0863}&\leq& ||R_2||_{\rho}^++\frac1{\gamma^3}\cdot e^{{\delta^{-\frac{10}{\theta}}}}\left(||R_0||_{\rho}^++||R_1||_{\rho}^+\right).
\end{eqnarray}

The new normal form $N_+$ is given in (\ref{047}). Note that $[R_0]$ (in view of (\ref{042})) is a constant which does not affect the Hamiltonian vector field. Moreover, in view of (\ref{043}), we denote by
\begin{equation*}\label{087}
\omega_{n+}=\sqrt{n^2+ \widetilde{V}_n}+\sum_{a\in\mathbb{N}^{\mathbb{N}^*}}B_{a00}^{(n)}\mathcal{M}_{a00},
\end{equation*}
where the terms $$\sum_{a\in\mathbb{N}^{\mathbb{N}^*}}B_{a00}^{(n)}\mathcal{M}_{a00}$$ is the so-called frequency shift. The estimate of $\left|\sum_{a\in\mathbb{N}^{\mathbb{N}^*}}B_{a00}^{(n)}\mathcal{M}_{a00}\right|$ will be given in the next section (see (\ref{114}) for the details).

Finally, we give the estimate of the Hamiltonian vector field.
\begin{lem}\label{H6}
Given a Hamiltonian
\begin{equation*}\label{028}
H=\sum_{a,k,k'\in\mathbb{N}^{\mathbb{N}^*}}B_{akk'}\mathcal{M}_{akk'},
\end{equation*}
then for any $r>\left(\frac{1}{2-2^{\theta}}+3\right)\rho$ and $$\sup_{n\in \mathbb{N}^*}|I_{n}(0)|e^{2rn^{\theta}} < 1 ,$$ one has
\begin{equation}\label{029}
\sup_{\left|\left|z\right|\right|_{r,\theta}<1}\left|\left|X_H\right|\right|_{r,\theta}\leq C(r,\rho,\theta)||H||_{\rho},
\end{equation}
where $C(r,\rho,\theta)$ is a positive constant depending on $r,\rho$ and $\theta$ only.
\end{lem}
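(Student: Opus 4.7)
The plan is a pointwise estimate followed by a summation over monomials. By the symmetry between $\partial_{\bar z_n}H$ and $\partial_{z_n}H$, I focus on the former. Term-by-term differentiation gives
$$\partial_{\bar z_n} H = \sum_{\substack{a,k,k' \\ k_n' \ge 1}} k_n' B_{akk'}\,\frac{\mathcal M_{akk'}}{\bar z_n}.$$
For $z$ with $\|z\|_{r,\theta}<1$ we have $|z_m|\le e^{-rm^\theta}$, and the hypothesis gives $|I_m(0)|\le e^{-2rm^\theta}$, so each monomial satisfies
$$|\mathcal M_{akk'}/\bar z_n| \le \exp\bigl(-r\textstyle\sum_m(2a_m+k_m+k_m')m^\theta + rn^\theta\bigr).$$

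Combining this with the coefficient bound
$$|B_{akk'}| \le \|H\|_\rho\,\frac{e^{\rho(\sum_m(2a_m+k_m+k_m')m^\theta - 2n_1^\theta)}}{\prod_m m^{(2a_m+k_m+k_m')/2}}$$
from Definition~\ref{020804}, multiplying by $e^{rn^\theta}$, and using $n\le n_1$ (since $k_n'\ge 1$ forces $n$ to lie in the support of $k'$), I obtain the pointwise bound
$$e^{rn^\theta}\,k_n'\,|B_{akk'}\mathcal M_{akk'}/\bar z_n| \le \|H\|_\rho\cdot\frac{k_n'}{\prod_m m^{(2a_m+k_m+k_m')/2}}\cdot e^{-(r-\rho)(\sum_m(2a_m+k_m+k_m')m^\theta - 2n_1^\theta)}.$$
In the generic case where $n_1$ has multiplicity $\ge 2$ in the decreasing rearrangement $(n_i)$ (equivalently $2a_{n_1}+k_{n_1}+k_{n_1}'\ge 2$), we have $\sum_m(\cdot)m^\theta - 2n_1^\theta = \sum_{i\ge 3}n_i^\theta \ge 0$, so the exponential factor is bounded by $1$.

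Next, I sum over $(a,k,k')$ with $k_n'\ge 1$. After shifting $\tilde k_n'=k_n'-1$, so that the combinatorial factor $k_n'$ is absorbed as a derivative of a geometric series in the $n$-th mode, the sum factors across modes $m\in\mathbb{N}^*$ into geometric-type series:
$$\prod_{m\in\mathbb{N}^*}\sum_{a_m,k_m,k_m'\ge 0}x_m^{2a_m+k_m+k_m'} \;=\; \prod_m \frac{1}{(1-x_m^2)(1-x_m)^2},\qquad x_m:=\frac{e^{-(r-\rho)m^\theta}}{m^{1/2}}.$$
The hypothesis $r > (\tfrac{1}{2-2^\theta}+3)\rho$ ensures $x_m<1$ for every $m$ and $\sum_m x_m<\infty$, so the infinite product converges to a finite constant $C(r,\rho,\theta)$. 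The estimate for $\partial_{z_n}H$ is entirely symmetric (swap the roles of $k$ and $k'$), which yields (\ref{029}).

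The step I expect to be the main obstacle is the ``multiplicity-one'' subcase, where $n_1$ appears exactly once in the rearrangement; there $\sum_m(\cdot)m^\theta - 2n_1^\theta$ may fail to be non-negative, producing an apparent growth factor $e^{(r-\rho)n_1^\theta}$ that must be absorbed by the $n_1^{-1/2}$ damping already present in $\prod_m m^{(\cdot)/2}$, together with the generous margin $r>(\tfrac{1}{2-2^\theta}+3)\rho$ built into the hypothesis. The careful bookkeeping in that subcase, combined with verifying that the tail product $\prod_m (1-x_m)^{-2}(1-x_m^2)^{-1}$ is indeed uniformly finite under the chosen $r$-to-$\rho$ ratio, is the delicate part of the proof.
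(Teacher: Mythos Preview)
Your overall strategy—differentiate termwise, bound the monomials, then sum—is the right one and matches what the paper does (the paper simply records that $\bigl|\prod_n n^{(2a_n+k_n+k_n')/2}B_{akk'}\bigr|\ge |B_{akk'}|$ and invokes Lemma~5.2 of \cite{CLSY}). But there are two genuine gaps in your write-up.

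\textbf{The ``multiplicity-one'' repair is impossible.} You propose to absorb a factor $e^{(r-\rho)n_1^{\theta}}$ by the polynomial damping $n_1^{-1/2}$. No choice of the ratio $r/\rho$ makes $e^{(r-\rho)n_1^{\theta}}/n_1^{1/2}$ bounded in $n_1$; exponential growth beats polynomial decay. The correct observation is that the Hamiltonians arising in the paper all satisfy the momentum constraint~\eqref{0m} (it holds for the original nonlinearity and is preserved by Poisson brackets), so Lemma~\ref{020801} applies and gives
\[
\sum_{m}(2a_m+k_m+k_m')m^{\theta}-2n_1^{\theta}\ \ge\ (2-2^{\theta})\sum_{i\ge 3}n_i^{\theta}\ \ge 0
\]
unconditionally. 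There is no multiplicity-one subcase to worry about; the exponent is always nonnegative.

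\textbf{The factorization and the bound do not match.} Having established the pointwise estimate with exponent $-(r-\rho)\bigl(\sum_m(\cdot)m^{\theta}-2n_1^{\theta}\bigr)$, you then factorize the sum as $\prod_m(1-x_m^2)^{-1}(1-x_m)^{-2}$ with $x_m=e^{-(r-\rho)m^{\theta}}/m^{1/2}$. But that product corresponds to the exponent $-(r-\rho)\sum_m(\cdot)m^{\theta}$, \emph{without} the $-2n_1^{\theta}$ shift. If instead you first bound the shifted exponential by $1$ (as you say in the ``generic'' paragraph), the $e^{-(r-\rho)m^{\theta}}$ factor disappears from $x_m$, and the product diverges at $m=1$. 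Either way the step fails as written. What is actually needed is the inequality~\eqref{001.}, which converts the shifted exponent into $(2-2^{\theta})\sum_{i\ge 3}n_i^{\theta}$; summing this over $(a,k,k')$ is the combinatorial content of Lemma~5.2 in \cite{CLSY}, and it is precisely here that the hypothesis $r>\bigl(\tfrac{1}{2-2^{\theta}}+3\bigr)\rho$ is used. Your outline does not contain this step.
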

\begin{proof}
Letting
\begin{equation*}
\widetilde{B}_{akk'}=\left(\prod_{n\in\mathbb{N}^*}n^{\frac12\left(2a_n+k_n+k'_n\right)}\right)B_{akk'}
\end{equation*}
and noting that
\begin{equation*}
\left|\widetilde{B}_{akk'}\right|\geq \left|B_{akk}\right|,
\end{equation*}
then following the proof of (5.21) in Lemma 5.2 in \cite{CLSY}, we finish the proof of (\ref{029}).
\end{proof}

\subsection{Iteration and Convergence}

Now we give the precise
set-up of iteration parameters. Let $s\geq1$ be the $s$-th KAM
step.
 \begin{itemize}
 \item[] $\rho_0=\rho,$\ $r\geq \frac{100\rho}{2-2^{\theta}}$,
 \item[]$\delta_{s}=\frac{\rho}{s^2}$,

 \item[]$\rho_{s+1}=\rho_{s}+3\delta_s$,

 \item[]$\epsilon_s=\epsilon_{0}^{(\frac{3}{2})^s}$, which dominates the size of
 the perturbation,

 \item[]$\lambda_s=e^{-C(\theta)(\ln{\frac{1}{\epsilon_{s+1}}})^{\frac{4}{\theta+4}}}$,

 \item[]$ \eta_{0}=1.1-\sup_{n\in\mathbb{N}^*}\omega_n,\eta_{s+1}=\frac{1}{20}\lambda_s\eta_s,$

 \item[]$d_0=0,\,d_{s+1}=d_s+\frac{1}{\pi^2(s+1)^2}$,

 \item[]$D_s=\left\{(z_n)_{n\in\mathbb{N}^*}:\frac{1}{2}+d_s\leq|z_n|e^{rn^{\theta}}\leq1-d_s\right\}$.
 \end{itemize}
Denote the complex cube of size $\lambda>0$:
\begin{equation*}\label{088}
\mathcal{C}_{\lambda}({V^*})=\left\{\left(V_n\right)_{n\in\mathbb{N}^*}\in\mathbb{C}^{\mathbb{N}^*}:|V_n-V^*_n|\leq \lambda\right\}.
\end{equation*}

\begin{lem}{\label{E2}}
Suppose $H_{s}=N_{s}+R_{s}$ is real analytic on $D_{s}\times\mathcal{C}_{\eta_{s}}(V^*_{s})$,
where $$N_{s}=\sum_{n\in\mathbb{N}^*}\lambda_{n,s}(V)|z_n|^2$$ is a normal form with
 \begin{equation}\label{020101}
 \lambda_{n,s}(V)=\sqrt{n^2+\widetilde V_{n,s}(V)}
 \end{equation}satisfying
\begin{eqnarray}
\label{089}&&\sqrt{n^2+\widetilde V_{n,s}(V^*_s)}=n+\omega_n,\\
\label{090}&&\left|\left|\frac{\partial \widetilde{V}_s}{{\partial V}}-I\right|\right|_{l^{\infty}\rightarrow l^{\infty}}<d_s\epsilon_{0}^{\frac{1}{10}},
\end{eqnarray}
and $R_{s}=R_{0,s}+R_{1,s}+R_{2,s}$ satisfying
\begin{eqnarray}
\label{091}&&||R_{0,s}||_{\rho_{s}}^{+}\leq \epsilon_{s},\\
\label{092}&&||R_{1,s}||_{\rho_{s}}^{+}\leq \epsilon_{s}^{0.6},\\
\label{093}&&||R_{2,s}||_{\rho_{s}}^{+}\leq (1+d_s)\epsilon_0.
\end{eqnarray}
Assume that $\omega=(\omega_n)_{n\in\mathbb{N}^*}$ satisfies the nonresonant conditions (\ref{005}) and (\ref{005.}). Then for all $V\in\mathcal{C}_{\eta_{s}}(V_{s}^*)$ satisfying $\widetilde V_{s}(V)\in\mathcal{C}_{\lambda_s}(\omega)$, there exist a real analytic symplectic coordinate transformation
$\Phi_{s+1}:D_{s+1}\rightarrow D_{s}$ satisfying
\begin{eqnarray}
\label{094}&&\left|\left|\Phi_{s+1}-id\right|\right|_{(r,\theta)}\leq \epsilon_{s}^{0.5},\\
\label{095}&&\left|\left|D\Phi_{s+1}-I\right|\right|_{(r,\theta)\rightarrow(r,\theta)}\leq \epsilon_{s}^{0.5},
\end{eqnarray}
such that for
$H_{s+1}=H_{s}\circ\Phi_{s+1}=N_{s+1}+R_{s+1}$, the same assumptions as above are satisfied with `$s+1$' in place of `$s$', where $\mathcal{C}_{\eta_{s+1}}(V_{s+1}^*)\subset\widetilde V_{s}^{-1}(\mathcal{C}_{\lambda_s}(\omega))$ and
\begin{equation}\label{096}
\left|\left|\widetilde{V}_{s+1}-\widetilde{V}_{s}\right|\right|_{\infty}\leq\epsilon_{s}^{0.5},
\end{equation}
\begin{equation}\label{097}
\left|\left|V_{s+1}^*-V_{s}^*\right|\right|_{\infty}\leq2\epsilon_{s}^{0.5},
\end{equation}
where
\begin{equation*}
\left|\left|\Phi_{s+1}-id\right|\right|_{(r,\theta)}:=\sup_{z\in D_{s+1}}\left|\left|\left(\Phi_{s+1}-id\right)(z)
\right|\right|_{r,\theta}.
\end{equation*}
\end{lem}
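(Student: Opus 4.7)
The plan is to carry out one KAM step in the standard Newton-type fashion: construct $F = F_0 + F_1$ solving the homological equation (\ref{041}) at step $s$, take $\Phi_{s+1}$ to be the time-$1$ map of $X_F$, and then show that the transformed Hamiltonian $H_{s+1} = H_s \circ \Phi_{s+1}$ decomposes as $N_{s+1}+R_{s+1}$ with estimates (\ref{091})--(\ref{097}) reindexed to $s+1$. All ingredients are already prepared: Lemma \ref{N3} controls $F$, Lemma \ref{H6} converts Hamiltonian-norm bounds into vector-field bounds on $G^{r,\theta}$, Lemma \ref{012901} (the Poisson-bracket estimate) feeds into the recursive estimates (\ref{0861})--(\ref{0863}) that yield $R_{s+1}$, and (\ref{047}) tells us how the normal form updates.

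First I would apply Lemma \ref{N3} with $\rho=\rho_s$, $\delta=\delta_s=\rho/s^2$ to obtain $F$ with
\[
\|F\|^{+}_{\rho_s+\delta_s} \leq \tfrac{1}{\gamma^3}\, e^{C(\theta)\delta_s^{-5/\theta}}\bigl(\|R_{0,s}\|^{+}_{\rho_s}+\|R_{1,s}\|^{+}_{\rho_s}\bigr)\leq \tfrac{1}{\gamma^3}\,e^{C(\theta)s^{10/\theta}}\,\epsilon_s^{0.6}.
\]
Because $\epsilon_s = \epsilon_0^{(3/2)^s}$ decays super-exponentially, the polynomial factor in $s$ and the $\gamma^{-3}$ can be absorbed into $\epsilon_s^{0.55}$ for $\epsilon_0$ small enough. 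Lemma \ref{N2} then converts this back to the $\|\cdot\|_{\rho_s+2\delta_s}$ norm with only an extra $\delta_s^{-2}$ factor, after which Lemma \ref{H6} (applicable since $r \geq 100\rho/(2-2^\theta)$) gives $\sup_{D_s}\|X_F\|_{r,\theta} \leq \epsilon_s^{0.5}$. A standard Cauchy/fixed-point argument on $D_{s+1}\subset D_s$ (the shift $d_{s+1}-d_s=\frac{1}{\pi^2(s+1)^2}$ provides exactly the buffer needed) then produces $\Phi_{s+1}$ as a flow, yielding (\ref{094})--(\ref{095}); in particular $\Phi_{s+1}(D_{s+1})\subset D_s$.

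The decisive step, which I expect to be the main obstacle, is the update of the normal form and the accompanying inverse-function-theorem argument used to track $V_{s+1}^{*}$. By (\ref{047}), the new normal form is $N_s+[R_{0,s}]+[R_{1,s}]$; the constant $[R_{0,s}]$ drops out of the vector field, while $[R_{1,s}]=\sum_m J_m\,\Omega_{m,s}(I(0))$ shifts the frequency of the $m$-th mode by $\Omega_{m,s}:=\sum_a B^{(m)}_{a00,s}\prod_n I_n(0)^{a_n}$. I would then \emph{define} $\widetilde V_{n,s+1}(V)$ by
\[
\sqrt{n^{2}+\widetilde V_{n,s+1}(V)}=\sqrt{n^{2}+\widetilde V_{n,s}(V)}+\Omega_{n,s}(V),
\]
so that $N_{s+1}$ keeps the square-root shape (\ref{020101}). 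The $\ell^2$ (rather than $\ell^\infty$) nature of the NLW frequency is felt here: because $|\Omega_{n,s}|$ must be estimated directly from $\|R_{1,s}\|^{+}_{\rho_s}$ (cf.\ the factor $m$ in (\ref{051})) together with the decay of $I_n(0)\sim\epsilon^2 e^{-2rn^\theta}$, one has to verify, analogously to (\ref{114}), that $|\Omega_{n,s}|\leq \epsilon_s^{0.55}/n$ in $\ell^\infty$ and that $\partial_V\Omega_{n,s}$ is small enough in the operator norm $\ell^\infty\to\ell^\infty$ to preserve (\ref{090}) with constant $d_{s+1}$. Granting this, a Newton/inverse-function theorem on $\mathcal C_{\eta_s}(V_s^{*})$ (where $\eta_{s+1}=\frac{1}{20}\lambda_s\eta_s$ is tailored precisely to fit this step, and the shrinkage $\lambda_s$ absorbs the loss in the implicit function quantitative bound) produces a unique $V_{s+1}^{*}$ with $\sqrt{n^{2}+\widetilde V_{n,s+1}(V_{s+1}^{*})}=n+\omega_n$, together with the estimates (\ref{096})--(\ref{097}). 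Cauchy-type estimates on the analytic extension to $\mathcal C_{\eta_{s+1}}(V_{s+1}^{*})$ then give the inclusion $\mathcal C_{\eta_{s+1}}(V_{s+1}^{*})\subset \widetilde V_s^{-1}\bigl(\mathcal C_{\lambda_s}(\omega)\bigr)$.

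Finally, I would derive (\ref{091})--(\ref{093}) at level $s+1$ by feeding the bounds on $\|R_{i,s}\|^{+}_{\rho_s}$ into (\ref{0861})--(\ref{0863}) with $\delta=\delta_s$:
\begin{align*}
\|R_{0,s+1}\|^{+}_{\rho_{s+1}} &\leq \tfrac{1}{\gamma^3}e^{\delta_s^{-10/\theta}}\bigl(\epsilon_s+\epsilon_s^{0.6}\bigr)\bigl(\epsilon_s+\epsilon_s^{1.2}\bigr)\leq \epsilon_{s+1},\\
\|R_{1,s+1}\|^{+}_{\rho_{s+1}} &\leq \tfrac{1}{\gamma^3}e^{\delta_s^{-10/\theta}}\bigl(\epsilon_s+\epsilon_s^{1.2}\bigr)\leq \epsilon_{s+1}^{0.6},\\
\|R_{2,s+1}\|^{+}_{\rho_{s+1}} &\leq (1+d_s)\epsilon_0+\tfrac{1}{\gamma^3}e^{\delta_s^{-10/\theta}}\bigl(\epsilon_s+\epsilon_s^{0.6}\bigr)\leq (1+d_{s+1})\epsilon_0,
\end{align*}
where the telescoping of the $d_s$ in the last line is absorbed thanks to $\sum_s \epsilon_s^{0.55}<d_{s+1}-d_s=\frac{1}{\pi^{2}(s+1)^{2}}$ for $\epsilon_0$ small, and the super-exponential decay of $\epsilon_s$ dominates every polynomial/factorial-type factor in $s$ coming from $\delta_s^{-10/\theta}$ and Lemma \ref{N2}. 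Combined with the previous paragraph this gives all the hypotheses of the lemma at step $s+1$, closing the induction.
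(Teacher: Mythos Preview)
Your overall architecture matches the paper's, but there is one genuine gap: you omit the \emph{truncation} step and the accompanying argument that the Diophantine conditions remain valid on a full parameter cube, not just at the single point $V_s^*$.

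Lemma~\ref{N3} is stated for a frequency vector $\omega$ satisfying (\ref{005}) and (\ref{005.}). At step $s$ the actual frequencies are $\lambda_{n,s}(V)-n$, which equal $\omega_n$ only at $V=V_s^*$ by (\ref{089}); for a general $V\in\mathcal C_{\eta_s}(V_s^*)$ you cannot invoke Lemma~\ref{N3} as written, and without analyticity of $F$ in $V$ all your later Cauchy estimates on $\partial_V(\widetilde V_{s+1}-\widetilde V_s)$ and the inverse-function-theorem step collapse. The paper fixes this as follows. First, it observes that in $R_{0,s},R_{1,s}$ one only needs to eliminate monomials with $\sum_{i\geq 3}n_i^{\theta}\leq B_s:=\tfrac{s^2}{(2-2^\theta)\rho}\ln\tfrac{1}{\epsilon_{s+1}}$, since the remaining ones already acquire a factor $\leq\epsilon_{s+1}$ from the $\delta_s$-shift in the weight; this forces $n_3^{*}\leq B_s^{1/\theta}$ and, via (\ref{013002}), bounds $\sum_i(n_i^*)^{\theta}$. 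Second, with this restriction on $k-k'$ the Diophantine products in (\ref{005}) and (\ref{005.}) are bounded below by an explicit $\sigma_s=e^{-C(\rho,\theta)(\ln\frac{1}{\epsilon_{s+1}})^{4/(\theta+4)}}$; this is the computation (\ref{103})--(\ref{103.}). Hence the nonresonant conditions survive any perturbation of the frequency of size $\sigma_s$, and since $\Lambda_s\bigl(\mathcal C_{\frac{1}{10}\sigma_s\eta_s}(V_s^*)\bigr)\subset\mathcal C_{\sigma_s}(\omega)$ (this is (\ref{020102})), the homological equation can be solved \emph{analytically in $V$} on a sub-cube. This is precisely what the parameter $\lambda_s$ and the rule $\eta_{s+1}=\tfrac{1}{20}\lambda_s\eta_s$ encode---the shrinkage is dictated by the size of the cube on which small divisors stay controlled, not by generic ``implicit-function quantitative bounds'' as you suggest. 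The paper also needs the iterated lower bound $\eta_s\geq\epsilon_0^{\frac{1}{100}(3/2)^s}$ (inequality (\ref{118})) so that the Cauchy estimate (\ref{116}) on $\partial_V(\widetilde V_{s+1}-\widetilde V_s)$ is actually small enough to recover (\ref{090}) at level $s+1$.
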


\begin{proof}
In the step $s\rightarrow s+1$, there is saving of a factor
\begin{equation}\label{19010501}
e^{-\delta_{s}\left(\sum_{n\in\mathbb{N}^*}(2a_n+k_n+k'_n)n^{\theta}-2n_1^{\theta}\right)}.
\end{equation}
By (\ref{001.}), one has
\begin{equation*}
(\ref{19010501})\leq e^{-\left(2-2^\theta\right)\delta_{s}\left(\sum_{i\geq3}n_i^{\theta}\right)}.
\end{equation*}
Recalling after this step, we need
\begin{eqnarray*}
&&||R_{0,s+1}||_{\rho_{s+1}}^{+}\leq \epsilon_{s+1},\\
&&||R_{1,s+1}||_{\rho_{s+1}}^{+}\leq \epsilon_{s+1}^{0.6}.
\end{eqnarray*}
Consequently, in $R_{i,s}\ (i=0,1)$, it suffices to eliminate the nonresonant monomials $\mathcal{M}_{akk'}$ for which
\begin{equation*}
e^{-\left(2-2^\theta\right)\delta_{s}\left(\sum_{i\geq3}n_i^{\theta}\right)}\geq\epsilon_{s+1},
\end{equation*}
that is
\begin{equation}\label{098}
\sum_{i\geq3}n_i^{\theta}\leq\frac{s^2}{(2-2^\theta)\rho}\ln\frac{1}{\epsilon_{s+1}}:=B_s.
\end{equation}
On the other hand, by Remark \ref{013001.} one has
\begin{equation*}\label{013101}
(n_3^*)^{\theta}\leq n_3^{\theta}\leq \sum_{i\geq 3}n_i^{\theta}.
\end{equation*}
Hence, we assume that
\begin{equation*}
n_3^*\leq B_s^{\frac1{\theta}}:=\mathcal{N}_s.
\end{equation*}
We finished the truncation step.

Now we get lower bound on the right hand side of (\ref{063.}) and (\ref{063}) respectively.
Let \begin{equation*}M_s\sim \left(\frac{B_s}{\ln B_s}\right)^{\frac{2}{\theta+2}},\end{equation*} then we have
\begin{eqnarray}
&&\nonumber\prod_{n\in\mathbb{N}^*\atop n\neq n_1^*,n_2^*}
\left(\frac{1}{1+(k_n-k'_n)^2n^6}\right)^4\\
\nonumber&=&e^{-4\sum_{n\leq M_s,n\neq n_1^*,n_2^*}\ln\left({1+(k_n-k'_n)^2n^6}\right)-4\sum_{M_s<n\leq \mathcal{N}_s,n\neq n_1^*,n_2^* }\ln\left({1+(k_n-k'_n)^2n^6}\right)}\\
\nonumber&\geq& e^{-C(\theta)\left(M_s\ln B_s+(M_s^{-{\theta}}\ln M_s)B_s\right)}\\
\nonumber&\geq& e^{-C(\theta)\left(M_s\ln B_s+M_s^{-\frac{\theta}{2}}B_s\right)}\\
\nonumber&\geq& e^{-C(\theta)B_s^{\frac{3}{\theta+3}}}\\
\nonumber&\geq& e^{-C(\rho,\theta)s^{\frac{6}{\theta+3}} \left(\ln{\frac{1}{\epsilon_{s+1}}}\right)^{\frac{3}{\theta+3}}}\\
\label{103}&>&e^{-C(\rho,\theta)\left(\ln{\frac{1}{\epsilon_{s+1}}}\right)^{\frac{4}{\theta+4}}}=\sigma_s,
\end{eqnarray}
where the last inequality is based on $\epsilon_0$ is small enough, $C(\theta)$ is a positive constant depending on $\theta$ and $C(\rho,\theta)$ is a positive constant depending on $\rho$ and $\theta$.

In view of (\ref{013002}) and (\ref{098}), one has
\begin{equation*}
\sum_{i\geq 1}(n_i^*)^{\theta}\leq (11^{\theta}+2)B_s.
\end{equation*}
Let $$\widetilde{\mathcal{N}}_s=(11^{\theta}+2)^{\frac1{\theta}}\mathcal{N}_s,$$
then following the proof of (\ref{103}), we have
\begin{eqnarray}\label{103.}
\prod_{n\in\mathbb{N}^*}
\frac{1}{1+(k_n-k'_n)^2n^5}>\sigma_s.
\end{eqnarray}

Assuming $\widetilde{\omega}\in \mathcal{C}_{\sigma_s}(\omega)$ and from the lower bound (\ref{103}) and (\ref{103.}), the relation (\ref{005}) and (\ref{005.}) remain true if we substitute $\widetilde{\omega}$ for $\omega$. Moreover, there is analyticity on $\mathcal{C}_{\sigma_s}(\omega)$. The transformations $\Phi_{s+1}$ is obtained as the time-1 map $X_{F_s}^{t}|_{t=1}$ of the Hamiltonian
vector field $X_{F_s}$ with $F_s=F_{0,s}+F_{1,s}$. Taking $\rho=\rho_s$, $\delta=\delta_s$ in Lemma \ref{N3}, we get
\begin{eqnarray}\label{104}
\left|\left|F_{i,s}\right|\right|_{\rho_s+\delta_s}^{+}\leq \frac{1}{\gamma^3}\cdot e^{C(\theta)\delta_s^{-\frac5\theta}}\left|\left|R_{i,s}\right|\right|_{\rho_s}^{+},
\end{eqnarray}
where $i=0,1$. By Lemma \ref{N2}, we get
\begin{equation}\label{105}
||F_{i,s}||_{\rho_s+2\delta_s}\leq\frac{C(\theta)}{\delta_s^2}||F_{i,s}||
_{\rho_s+\delta_s}^{+}.
\end{equation}
Combining (\ref{091}), (\ref{092}), (\ref{104}) and (\ref{105}), we get
\begin{equation*}\label{106}
||F_{s}||_{\rho_s+2\delta_s}\leq\frac{C(\theta)}{\gamma^3\delta_s^2}e^{C(\theta)\delta_s^{-\frac5\theta}}(\epsilon_{s}+\epsilon_{s}^{0.6}).
\end{equation*}
By Lemma \ref{H6}, we get
\begin{eqnarray}
\sup_{||z||_{r,\theta}<1}\left|\left|X_{F_s}\right|\right|_{r,\theta}\nonumber
&\leq&C(r,\rho,\theta)||F_{s}||_{\rho_s+2\delta_s}\nonumber\\
&\leq&\frac{C(r,\rho,\theta)}{\gamma^3\delta_s^2}e^{C(\theta)\delta_s^{-\frac5\theta}}(\epsilon_{s}+\epsilon_{s}^{0.6})
\nonumber\\
\label{107}\nonumber&\leq&\epsilon_{s}^{0.55},
\end{eqnarray}
where noting that $0<\epsilon_0\ll1$ small enough and depending on $r,\rho,\theta$ only.

Since $\epsilon_{s}^{0.55}\ll\frac{1}{\pi^2(s+1)^2}=d_{s+1}-d_s$, we have $\Phi_{s+1}:D_{s+1}\rightarrow D_{s}$ with
\begin{equation*}\label{108}
\left|\left|\Phi_{s+1}-id\right|\right|_{(r,\theta)}\leq\sup_{z\in D_{s+1}}\left|\left|X_{F_s}\right|\right|_{r,\theta}\leq\epsilon_{s}^{0.55}<\epsilon_{s}^{0.5},
\end{equation*}
which is the estimate (\ref{094}). Moreover, by Cauchy estimate we get
\begin{equation*}\label{109}
\left|\left|DX_{F_s}-I\right|\right|_{(r,\theta)\rightarrow(r,\theta)}\leq\frac{1}{d_s}\epsilon_{s}^{0.55}<\epsilon_{s}^{0.5},
\end{equation*}
and thus the estimate (\ref{095}) follows.

Moreover, under the assumptions (\ref{091})-(\ref{093}) at stage $s$, we get from (\ref{0861}), (\ref{0862}) and (\ref{0863}) that
\begin{eqnarray*}
||R_{0,s+1}||_{\rho_{s+1}}^{+}
&\leq& e^{\frac{s^{\frac{20}{\theta}}}{\rho^{\frac{10}{\theta}}}}
\left(\epsilon_{0}^{\left(\frac{3}{2}\right)^s}+\epsilon_{0}^{0.9\left(\frac{3}{2}\right)^{s-1}}\right)\left(\epsilon_{0}^{\left(\frac{3}{2}\right)^s}+\epsilon_{0}^{1.8\left(\frac{3}{2}\right)^{s-1}}\right)
\leq\epsilon_{s+1},\\
||R_{1,s+1}||_{\rho_{s+1}}^{+}
&\leq& e^{\frac{s^{\frac{20}{\theta}}}{\rho^{\frac{20}{\theta}}}}\left( \epsilon_{0}^{\left(\frac{3}{2}\right)^s}+\epsilon_{0}^{1.8\left(\frac{3}{2}\right)^{s-1}}\right)
\leq\epsilon_{s+1}^{0.6},\\
\end{eqnarray*}
and
\begin{eqnarray*}
||R_{2,s+1}||_{\rho_{s+1}}^{+}
&\leq& ||R_{2,s}||_{\rho_{s}}^{+}+e^{\frac{s^{\frac{20}{\theta}}}{\rho^{\frac{20}{\theta}}}}
\left(\epsilon_{0}^{\left(\frac{3}{2}\right)^s}+\epsilon_{0}^{0.6\left(\frac{3}{2}\right)^{s}}\right)\\
&\leq&(1+d_s)\epsilon_0+2e^{\frac{s^{\frac{20}{\theta}}}{\rho^{\frac{20}{\theta}}}}
\epsilon_{0}^{0.6\left(\frac{3}{2}\right)^s}\\
&\leq&(1+d_{s+1})\epsilon_0,
\end{eqnarray*}
which are just the assumptions (\ref{091})-(\ref{093}) at stage $s+1$.

Define
\begin{equation*}
\Lambda_s(V)=(\Lambda_{n,s}(V))_{n\in\mathbb{N}^*}
\end{equation*}with
\begin{equation*}
\Lambda_{n,s}(V)=\lambda_{n,s}(V)-n.
\end{equation*}For any $n\in\mathbb{N}^*$, now we would like to prove
\begin{equation}\label{020102}
\Lambda_{s}\left(\mathcal{C}_{\frac1{10}\sigma_s\eta_s}(V_s^*)\right)\subseteq \mathcal{C}_{\sigma_s}(\omega).
\end{equation}
In view of (\ref{020101}), one has
\begin{equation*}
\Lambda_{n,s}(V)=\frac{\widetilde{V}_{n,s}(V)}{\sqrt{n^2+\widetilde{V}_{n,s}(V)}+n}.
\end{equation*}
Hence
\begin{equation*}
\left|\Lambda_{n,s}(V)\right|\leq \frac1{n}\left|\widetilde{V}_{n,s}(V)\right|,
\end{equation*}
where noting that
\begin{equation*}
\sqrt{n^2+\widetilde{V}_{n,s}(V)}+n\geq n.
\end{equation*}
If $V\in \mathcal{C}_{\frac{\eta_s}{2}}(V_s^*)\subseteq\mathcal{C}_{{\eta_s}}(V_s^*)$ and using Cauchy's estimate, one has
\begin{eqnarray}
\sum_{m\in\mathbb{N}^{*}}\left|\frac{\partial \widetilde{V}_{n,s}}{\partial V_m}(V)\right|
\label{110}&\leq& \frac{2}{\eta_s}\sup_{\mathcal{C}_{\frac{\eta_s}{2}}(V_s^*)}\left|\widetilde{V}_{n,s}\right|<\frac{10}{ \eta_s}.
\end{eqnarray}
Let $V\in \mathcal{C}_{\frac{1}{10}\sigma_s\eta_s}(V_s^*)\subseteq\mathcal{C}_{\frac{\eta_s}{2}}(V_s^*)$, then
\begin{eqnarray*}
&&\left|\Lambda_{n,s}(V)-\omega_n\right|\\
&=&\left|\sqrt{n^2+\widetilde{V}_{n,s}(V)}-\sqrt{n^2+\widetilde{V}_{n,s}(V_s^*)}\right|\\
&=&\frac{\left|\widetilde{V}_{n,s}(V)-\widetilde{V}_{n,s}(V_s^*)\right|}{\sqrt{n^2+\widetilde{V}_{n,s}(V)}+\sqrt{n^2+\widetilde{V}_{n,s}(V_s^*)}}\\
& \leq&\frac{2}{3n}\cdot\sup_{\mathcal{C}_{\frac{1}{10}\sigma_s\eta_s}(V_s^*)}\left|\left|\frac{\partial \widetilde{V}_s}{\partial V}\right|\right|_{l^{\infty}\rightarrow l^{\infty}}||V-V_s^*||_{\infty}\\
&<&\frac2{3n}\cdot10 \eta_s^{-1}\cdot\frac{1}{10}\sigma_s\eta_s\qquad \ \mbox{(in view of (\ref{110}))}\\
&=&\frac{2\sigma_s}{3n},
\end{eqnarray*}
which finishes the proof of (\ref{020102}).

Note that
\begin{eqnarray}
\nonumber\left|\left(\prod_{m\in\mathbb{N}^*}m^{2a_m}\right)nB^{(n)}_{a00}\right|
\nonumber&\leq& ||R_{1,s+1}||_{\rho_{s+1}}^+e^{2\rho_{s+1}\left(\sum_{m\in\mathbb{N}^*}a_mm^{\theta}+n^{\theta}-m_1^{\theta}\right)}\\
&<&\nonumber\epsilon_{0}^{0.6\left(\frac{3}{2}\right)^{s}}e^{2\rho_{s+1}\left(\sum_{m\in\mathbb{N}^*}a_mm^{\theta}+n^{\theta}-m_1^{\theta}\right)},
\end{eqnarray}
which implies
\begin{equation*}\label{111}
\left|B^{(n)}_{a00}\right|<\frac1{n}\epsilon_{0}^{0.6\left(\frac{3}{2}\right)^{s}}e^{2\rho_{s+1}\left(\sum_{m\in\mathbb{N}^*}a_mm^{\theta}+n^{\theta}-m_1^{\theta}\right)}
\end{equation*}
Assuming further
\begin{equation}\label{112}
I_{m}(0)\leq e^{-2rm^{\theta}}
\end{equation}
and for any $s$,
\begin{equation}\label{113}
\rho_s<\frac{1}{2}r,
\end{equation}
we obtain
\begin{eqnarray}
\nonumber\left|\sum_{a\in\mathbb{N}^{\mathbb{N}^*}}B^{(n)}_{a00}\mathcal{M}_{a00}\right|
\nonumber&\leq & \frac1{n} \epsilon_{0}^{0.6\left(\frac{3}{2}\right)^{s}}\sum_{a\in\mathbb{N}^{\mathbb{N}^*}}e^{2\rho_{s+1}\left(\sum_{m\in\mathbb{N}^*}a_mm^{\theta}+n^{\theta}-m_1^{\theta}\right)}\prod_{m\in\mathbb{N}^*}I_{m}(0)^{a_m}\\
\nonumber&\leq& \frac1{n} \epsilon_{0}^{0.6\left(\frac{3}{2}\right)^{s}}\sum_{a\in\mathbb{N}^{\mathbb{N}^*}}e^{2(\rho_{s+1}\left(\sum_{m\in\mathbb{N}^*}a_mm^{\theta}\right)}\prod_{m\in\mathbb{N}^*}I_{m}(0)^{a_m}\\
\nonumber&\leq& \frac1{n} \epsilon_{0}^{0.6\left(\frac{3}{2}\right)^{s}}\sum_{a\in\mathbb{N}^{\mathbb{N}^*}}e^{\sum_{m\in\mathbb{N}^*}2\left(\rho_{s+1}-r\right)a_mm^{\theta}}\qquad \mbox{(in view of (\ref{112}))}\\
\nonumber&\leq& \frac1{n} \epsilon_{0}^{0.6\left(\frac{3}{2}\right)^{s}}\sum_{a\in\mathbb{N}^{\mathbb{N}^*}}e^{-r\left(\sum_{m\in\mathbb{N}^*}a_mm^{\theta}\right)}\qquad \mbox{(in view of (\ref{113}))}\\
\nonumber&\leq& \frac1{n}\epsilon_{0}^{0.6\left(\frac{3}{2}\right)^{s}}\prod_{m\in\mathbb{N}^*}\left(1-e^{-r m^{\theta}}\right)^{-1} \\
\label{114}&\leq&\frac1{n}\epsilon_{0}^{0.6\left(\frac{3}{2}\right)^{s}}\left(\frac{1}{r}\right)^{C(\theta){r^{-\frac{1}{\theta}}}},
\end{eqnarray}
i.e.
\begin{equation*}
\left|\lambda_{n,s+1}-\lambda_{n,s}\right|<\frac1{n}\epsilon_{0}^{0.6\left(\frac{3}{2}\right)^{s}}\left(\frac{1}{r}\right)^{C(\theta){r^{-\frac{1}{\theta}}}}.
\end{equation*}
Noting that
\begin{equation*}
\lambda_{n,s+1}-\lambda_{n,s}=\sqrt{n^2+\widetilde {V}_{n,s+1}}-\sqrt{n^2+\widetilde {V}_{n,s}}=\frac{\widetilde{V}_{n,s+1}-\widetilde{V}_{n,s}}{\sqrt{n^2+\widetilde {V}_{n,s+1}}+\sqrt{n^2+\widetilde {V}_{n,s}}},
\end{equation*}
then one has
\begin{eqnarray}
\nonumber\left|\widetilde{V}_{n,s+1}-\widetilde{V}_{n,s}\right|
\nonumber&<&\left(\frac{1}{r}\right)^{C(\theta){r^{-\frac{1}{\theta}}}}\epsilon_{0}^{0.6\left(\frac{3}{2}\right)^{s}}<\epsilon_{s}^{0.5},
\end{eqnarray}
which verifies (\ref{096}). Further applying Cauchy's estimate on $\mathcal{C}_{\sigma_s\eta_s}(V_s^*)$, one gets
\begin{eqnarray}
\nonumber\sum_{m\in\mathbb{N}^*}\left|\frac{\partial \widetilde{V}_{n,s+1}- \widetilde{V}_{n,s}}{\partial V_m}\right|
\nonumber&\leq& \frac{10\epsilon_{s}^{0.5}}{\sigma_s\eta_s}\\
\nonumber&\leq& \left(\frac{10}{\eta_s}\right) e^{C(\rho,\theta)(\ln\frac{1}{\epsilon_{s+1}})^{\frac{4}{4+\theta}}-0.5\ln\frac{1}{\epsilon_{s+1}}}\\
\nonumber&\leq&\left(\frac{1}{\eta_s}\right) e^{-\frac14\ln\frac{1}{\epsilon_{s+1}}}\\
\label{116}&=& \frac{1}{\eta_s}\epsilon_{0}^{\frac{1}{4}\left(\frac{3}{2}\right)^{s+1}}.
\end{eqnarray}
Since
\begin{equation*}
\eta_{s+1}=\frac{1}{20}\sigma_s\eta_s,
\end{equation*}
it follows that
\begin{eqnarray}
\nonumber\eta_{s+1}&=& \eta_s e^{-C(\rho,\theta)\left(\ln\frac{1}{\epsilon_0}\right)^{\frac{4}{4+\theta}}\left(\frac32\right)^{\frac{4(s+1)}{4+\theta}}}\\
\nonumber&\geq& \eta_se^{-C(\rho,\theta)\left(\ln\frac{1}{\epsilon_0}\right)\left(\frac32\right)^{\frac{5s}{5+\theta}}}\ \ \ \mbox{(for $\epsilon_0$ small enough)}\\
\label{117}&=&\eta_s\epsilon_0^{C(\rho,\theta)\left(\frac32\right)^{\frac{5s}{5+\theta}}},
\end{eqnarray}
and hence by iterating (\ref{117}) implies
\begin{eqnarray}
\nonumber\eta_{s}&\geq&\eta_0\epsilon_{0}^{C(\rho,\theta)\sum_{i=0}^{s-1}\left(\frac{3}{2}\right)^{\frac{5i}{5+\theta}}}\\
\nonumber&=&\eta_0\epsilon_{0}^{C(\rho,\theta)\frac{\left(\frac32\right)^{\frac{5s}{5+\theta}}-1}{\left(\frac32\right)^{\frac{5}{5+\theta}}-1}}\\
\nonumber&>&\eta_0\epsilon_{0}^{C(\rho,\theta)\left(\frac{3}{2}\right)^{\frac{5s}{5+\theta}}}\\
\label{118}&\geq&\epsilon_{0}^{\frac{1}{100}\left(\frac{3}{2}\right)^{s}}\ \ \ \mbox{(for $\epsilon_0$ small enough)}.
\end{eqnarray}
On $ \mathcal{C}_{\frac{1}{10}\sigma_s\eta_s}(V_s^*)$ and for any $n$, we deduce from (\ref{116}), (\ref{118}) and the assumption (\ref{090}) that
\begin{eqnarray*}
\sum_{m\in\mathbb{N}^*}\left|\frac{\partial \widetilde{V}_{n,s+1}}{\partial V_m}-\delta_{nm}\right|
&\leq&\sum_{m\in\mathbb{N}^*}\left|\frac{\partial\widetilde{V}_{n,s+1}}{\partial V_m}-\frac{\partial\widetilde{V}_{n,s}}{\partial V_m}\right|+\sum_{m\in\mathbb{N}^*}\left|\frac{\partial \widetilde{V}_{n,s}}{\partial V_m}-\delta_{nm}\right|\\
&\leq&\epsilon_{0}^{\left(\frac{3}{8}-\frac{1}{100}\right)\left(\frac{3}{2}\right)^{s}}+d_s\epsilon_{0}^{\frac{1}{10}}\\
&<&d_{s+1}\epsilon_{0}^{\frac{1}{10}},
\end{eqnarray*}
and consequently
\begin{equation*}\label{119}
\left|\left|\frac{\partial \widetilde{V}_{s+1}}{{\partial V}}-I\right|\right|_{l^{\infty}\rightarrow l^{\infty}}<d_{s+1}\epsilon_{0}^{\frac{1}{10}},
\end{equation*}
which verifies (\ref{090}) for $s+1$.

Finally, we will freeze $\omega$ by invoking an inverse function theorem. Consider the following functional equation
\begin{equation*}\label{120}
\widetilde{V}_{n,s+1}(V_{s+1}^*)=n\omega_n+\omega_n^2,
\end{equation*}
and
\begin{equation*}
V_{s+1}^*\in \mathcal{C}_{\frac{1}{10}\sigma_s\eta_s}(V_s^*).
\end{equation*}
From (\ref{090}) and the standard inverse function theorem implies (\ref{120}) having a solution $V_{s+1}^*$, which verifies (\ref{089}) for $s+1$. Noting that
\begin{equation*}\label{121}
V_{s+1}^*-V_s^*=(I-\widetilde{V}_{s+1})(V^*_{s+1})-(I-\widetilde{V}_{s+1})({V^*_s})+(\widetilde{V}_s-\widetilde{V}_{s+1})(V_s^*),
\end{equation*}
and  using (\ref{096}), (\ref{090}), one has
\begin{equation*}\label{122}
||V_{s+1}^*-V_s^*||_{\infty}\leq (1+d_{s+1})\epsilon_{0}^{\frac{1}{10}}||V_{s+1}^*-V_s^*||_{\infty}+\epsilon_s^{0.5}<2\epsilon_s^{0.5}\ll \sigma_s\eta_s,
\end{equation*}
which verifies (\ref{097}) and completes the proof of the iterative lemma.
\end{proof}

We are now in a position to prove Theorem \ref{thm}.
 \begin{proof}To apply iterative lemma with $s=0$, set
\begin{equation*}
V_{n,0}=n\omega_n+\omega_n^2,\hspace{12pt}\widetilde{V}_0=id,\hspace{12pt}\epsilon_0=\epsilon,
\end{equation*}
and consequently (\ref{089})--(\ref{093}) with $s=0$ are satisfied. Hence, the iterative lemma applies, and we obtain a decreasing
sequence of domains $D_{s}\times\mathcal{C}_{\eta_{s}}(V_{s}^*)$ and a sequence of
transformations
\begin{equation*}
\Phi^s=\Phi_1\circ\cdots\circ\Phi_s:\hspace{6pt}D_{s}\times\mathcal{C}_{\eta_{s}}(V_{s}^*)\rightarrow D_{0}\times\mathcal{C}_{\eta_{0}}(V_{0}^*),
\end{equation*}
such that $H\circ\Phi^s=N_s+R_s$ for $s\geq1$. Moreover, the
estimates (\ref{094})--(\ref{097}) hold. Thus we can show $V_s^*$ converge to a limit $V_*$ with the estimate
\begin{equation*}
||V_*-\omega||_{\infty}\leq\sum_{s=0}^{\infty}2\epsilon_{s}^{0.5}<\epsilon^{0.4},
\end{equation*}
and $\Phi^s$ converge uniformly on $D_*\times\{V_*\}$, where $D_*=\{(z_n)_{n\in\mathbb{N}^*}:\frac{2}{3}\leq|z_n|e^{rn^{\theta}}\leq\frac{5}{6}\}$, to $\Phi:D_*\times\{V_*\}\rightarrow D_0$ with the estimates
\begin{eqnarray}
\nonumber&&||\Phi-id||_{\left(r,\theta\right)}\leq \epsilon^{0.4},\\
\nonumber&&||D\Phi-I||_{(r,\theta)\rightarrow(r,\theta)}\leq \epsilon^{0.4}.
\end{eqnarray}
Hence
\begin{equation*}\label{123}
H_*=H\circ\Phi=N_*+R_{2,*},
\end{equation*}
where
\begin{equation*}\label{124}
N_*=\sum_{n\in\mathbb{N}^*}(n+\omega_n)|z_n|^2
\end{equation*}
and
\begin{equation*}\label{125}
||R_{2,*}||_{10\rho}^{+}\leq\epsilon^{0.4}.
\end{equation*}
\end{proof}
\begin{rem}\label{030201}
By (\ref{029}), the Hamiltonian vector field $X_{R_{2,*}}$ is a bounded map from ${G}^{r,\theta}$ into ${G}^{r,\theta}$. Taking
\begin{equation*}\label{126}
I_n(0)=\frac{3}{4}e^{-2rn^{\theta}},
\end{equation*}
we get an invariant torus $\mathcal{T}$ with frequency $(n+\omega_n)_{n\in\mathbb{N}^*}$ for ${X}_{H_*}$. Moreover, we deduce the torus $\Phi(\mathcal{T})$ is linearly stable from the fact that (\ref{123}) is a normal form of order 2 around the invariant torus.
\end{rem}
\section{Application to the nonlinear wave equation}

We study equation (\ref{maineq0}) as an infinite dimensional hamiltonian system. As the
phase space one may take, for example, the product of the usual Sobolev spaces
$\mathcal{P}= H^1_0 ([0, \pi])\times L^2([0, \pi])$ with coordinates $u$ and $v = u_t$. Then the hamiltonian of (\ref{maineq0}) is
\begin{equation*}H = \frac12
\langle v, v\rangle + \frac12
\langle Au, u\rangle +\frac{\epsilon}4
\int_0^{\pi}
u^4 dx,
\end{equation*}
where $A = -d^2/dx^2+V(x)*$ and $\langle \cdot,\cdot\rangle $ denotes the usual scalar
product in $L^2$ . The hamiltonian equations of motions are
\begin{equation*}u_t = \frac{\partial H}{\partial v}=v,\quad
 v_t = -\frac{\partial H}{\partial u}=-Au-u^3,
\end{equation*}
hence they are equal to (\ref{maineq0}).

To rewrite it as a hamiltonian in infinitely many coordinates we make the ansatz
\begin{equation*}
u=\mathcal{S}q=\sum_{n\in\mathbb{N}^*}\frac{q_n}{\sqrt{\lambda_n}}\phi_n,\quad
v=\mathcal{S'}p=\sum_{n\in\mathbb{N}^*}\sqrt{\lambda_n}{p_n}\phi_n
\end{equation*}
where $$\phi_n=\sqrt{\frac{2}{\pi}}\sin nx$$ for $n=1,2,\dots$ are the normalized Dirichlet eigenfunctions
of the operator $A$ with eigenvalues
\begin{equation*}\lambda_n=\sqrt{n^2+V_n}.
\end{equation*}

We obtain the Hamiltonian
\begin{equation}\label{0001'}
H=\Lambda+G=\frac{1}2\sum_{n\in\mathbb{N}^*}\lambda_n(p_n^2+q_n^2)+
\frac{\epsilon}4\sum_{i,j,k,l\in\mathbb{N}^*\atop \pm i\pm j\pm k\pm l=0}G_{ijkl}q_iq_jq_kq_l,
\end{equation}
with
\begin{equation}\label{022601}
G_{ijkl}=\frac{1}{\sqrt{\lambda_i\lambda_j\lambda_k\lambda_l}}\int_0^{\pi}\phi_i\phi_j\phi_k\phi_ldx.
\end{equation}
We introduce the complex coordinates
\begin{equation*}
z_n=\frac{1}{\sqrt{2}}\left(q_n+\textbf{i}p_n\right),\qquad \bar z_n=\frac{1}{\sqrt{2}}\left(q_n-\textbf{i}p_n\right),
\end{equation*}with $\textbf{i}=\sqrt{-1}$. Then the Hamiltonian (\ref{0001'}) is turned into
\begin{equation}\label{012501}
H(z,\bar z)=\sum_{n\in\mathbb{N}^*}\lambda_nz_n\bar z_n+\frac{\epsilon}{16}\sum_{i,j,k,l\in\mathbb{N}^*\atop\pm i\pm j\pm k\pm l=0}G_{ijkl}(z_i+\bar z_i)(z_j+\bar z_j)(z_k+\bar z_k)(z_l+\bar z_l)
\end{equation}
 Then the  Hamiltonian (\ref{012501}) has the form of
\begin{equation*}
H(z,\bar z)=N(z,\bar z)+R(z,\bar z),
\end{equation*}
where
\begin{equation*}
N(z,\bar z)=\sum_{n\in\mathbb{N}^*}\lambda_n|z_n|^2,
\end{equation*}
and
 \begin{equation*}
 R(z,\bar z)=\frac{\epsilon}{16}\sum_{i,j,k,l\in\mathbb{N}^*\atop\pm i\pm j\pm k\pm l=0}G_{ijkl}(z_i+\bar z_i)(z_j+\bar z_j)(z_k+\bar z_k)(z_l+\bar z_l).
 \end{equation*}
In view of (\ref{022601}), one has
\begin{equation*}
\left|\left|R\right|\right|_{\rho}\leq C\epsilon.
\end{equation*}
Applying Theorem \ref{thm} and Remark \ref{030201}, we finish the proof of Theorem \ref{Thm}.
\section{Measure Estimate and technical lemma}
\begin{lem}\label{050601}
Let the set
$$\Pi=\left[0,1\right]\times\left[0,1/2\right]\times\cdots[0,1/n]\times\cdots$$ with probability measure. Then there exists a subset $\Pi_{\gamma}\subset\Pi$ with
\begin{equation}
\mbox{meas}\ \Pi_{\gamma}\leq C\gamma,
\end{equation}
where $C$ is a positive constant, such that for any $\omega\in\Pi\setminus\Pi_{\gamma}$, the inequalities (\ref{005}) and (\ref{005.}) holds.
\end{lem}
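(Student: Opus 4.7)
The plan is a Borel--Cantelli--style union bound: for each nonzero finitely supported integer sequence $l=k-k'$, I estimate the measure of the set $B_l\subset\Pi$ on which the corresponding inequality in Definition~\ref{022005} fails, and then sum over $l$. Recall that on $\Pi$ the coordinate $\omega_n$ is uniform on $[0,1/n]$, so it has Lebesgue density $n$, and the different $\omega_n$'s are independent.

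For (\ref{005}), I fix $l$ and take $n^{**}=\max\mbox{supp}(l)$ as pivot. Freezing the other $\omega_j$'s and using that $\omega_{n^{**}}\mapsto \sum_n l_n\omega_n$ is affine with nonzero slope $l_{n^{**}}$, the standard one-dimensional Diophantine argument yields
\[
\mbox{meas}(B_l)\leq C\gamma\,\frac{n^{**}}{|l_{n^{**}}|}\prod_{n\in\mathbb{N}^*}\frac{1}{1+l_n^2 n^5}.
\]
Stratifying by $n^{**}$ and factorizing the sum over $l$, the one-variable estimate $\sum_{l_{n^{**}}\neq 0}|l_{n^{**}}|^{-1}(1+l_{n^{**}}^2(n^{**})^5)^{-1}=O((n^{**})^{-5/2})$ combined with the uniform bound $\prod_{j\geq 1}(1+Cj^{-5/2})<\infty$ gives $\sum_l\mbox{meas}(B_l)\leq C\gamma\sum_{n^{**}\geq 1}(n^{**})^{-3/2}\leq C\gamma$.

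The more delicate step is (\ref{005.}), because $\delta_l:=\frac{\gamma^3}{16}\prod_{n\neq n_1^*,n_2^*}(1+l_n^2 n^6)^{-4}$ carries no explicit decay in $n_1^*$ or $n_2^*$. The structural constraint $n_3^*<n_2^*$ forces $|l_{n_1^*}|\leq 2$ (either $n_1^*=n_2^*$ with $|l_{n_1^*}|=2$, or $n_1^*>n_2^*$ with $|l_{n_1^*}|=|l_{n_2^*}|=1$); combined with $\sum_n|l_n|\geq 3$, this produces a well-defined $n_3^*\in\mbox{supp}(l)$ with $n_3^*<n_2^*$. The key observation is that $\omega_{n_1^*}\in[0,1/n_1^*]$ is tiny: $|l_{n_1^*}\omega_{n_1^*}|\leq 2/n_1^*\leq 2/n_2^*$. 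Hence, writing $c'(\omega^{\leq n_2^*}):=\sum_{n\leq n_2^*}l_n\omega_n$ (in the case $n_1^*>n_2^*$; the other case is analogous), $\omega\in B_l$ implies $\|c'(\omega^{\leq n_2^*})\|\leq \delta_l+2/n_2^*$, and this projected set depends only on $(l_n)_{n\leq n_2^*}$, not on $(n_1^*,l_{n_1^*})$. The $\omega$-union over admissible top-mode data therefore factors through this projection, eliminating the $n_1^*$-sum. I then pivot on $\omega_{n_3^*}$ (density $n_3^*$, $|l_{n_3^*}|\geq 1$): the fourth-power decay $(1+l_{n_3^*}^2(n_3^*)^6)^{-4}$ supplies a factor $(n_3^*)^{-23}$ after summing over $l_{n_3^*}\neq 0$, the remaining $l_n$, $n<n_3^*$, sum to a bounded product via $\prod(1+Cj^{-24})<\infty$, and the $(n_3^*)^{-23}$ decay also controls the sum over $n_2^*>n_3^*\geq 1$.

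The main obstacle is precisely the absence of $n_1^*,n_2^*$-decay in $\delta_l$: without the projection step, a naive pivot on $\omega_{n_1^*}$ would generate a divergent factor $\sum_{n_1^*\geq n_2^*}n_1^*$. The projection converts this into the additive correction $2/n_2^*$ inside the Diophantine threshold, at the cost of a slightly looser bound which is absorbed by the strong $n^6$-in-denominator, exponent-$4$ decay of $\delta_l$. Combining the two estimates and using $\gamma^3\leq\gamma$ since $0<\gamma<1$, one obtains $\mbox{meas}(\Pi_\gamma)\leq C\gamma$ as required.
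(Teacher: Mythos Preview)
Your treatment of (\ref{005}) is fine and matches the standard Borel--Cantelli argument the paper invokes. The gap is in your handling of (\ref{005.}). After your projection step the threshold for the set $\{\|c'\|\leq\cdot\}$ is $\delta_l+2/n_2^*$, not $\delta_l$. The additive term $2/n_2^*$ carries neither a power of $\gamma$ nor any factor of the product $\prod_{n\leq n_3^*}(1+l_n^2n^6)^{-4}$, so when you pivot on $\omega_{n_3^*}$ and then sum over the remaining $(l_n)_{n<n_3^*}$ that sum diverges and is not $O(\gamma)$. Even restricting to the $\delta_l$ part does not save you: since $n_2^*$ is excluded from the product defining $\delta_l$, the bound $Cn_3^*\delta_l$ is \emph{independent of $n_2^*$}, and the sum $\sum_{n_2^*>n_3^*}$ of this constant is infinite. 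The claim that ``the $(n_3^*)^{-23}$ decay also controls the sum over $n_2^*>n_3^*$'' is therefore incorrect: that decay is in the wrong variable.

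The paper closes this gap by a bootstrap rather than a projection. One first removes $\mathcal{R}_1$ (the bad set for (\ref{005})), and then observes that for $\omega\in\Pi\setminus\mathcal{R}_1$ the truncated sum $\sum_{n\neq n_1^*,n_2^*}l_n\omega_n$ already satisfies the Diophantine lower bound from (\ref{005}). Comparing this lower bound with $|\omega_{n_1^*}|+|\omega_{n_2^*}|\leq 2/n_2^*$, a violation of (\ref{005.}) forces
\[
n_2^*<A(l):=\tfrac{4}{\gamma}\prod_{n\neq n_1^*,n_2^*}(1+l_n^2n^5),\qquad n_1^*<B(l):=\tfrac{4}{\gamma}\Big(\prod_{n\neq n_1^*,n_2^*}(1+l_n^2n^5)\Big)^{2}.
\]
Thus the sums over $n_1^*,n_2^*$ are finite, of size at most $CA(l)B(l)=C\gamma^{-2}\prod(1+l_n^2n^5)^3$; this polynomial loss (and the $\gamma^{-2}$) is exactly what the fourth power and the exponent $6$ in $\delta_l$ are designed to absorb, leaving a convergent Bourgain-type sum. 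Your projection idea captures the right intuition that $\omega_{n_1^*}$ is negligible, but to turn it into a convergent estimate you must feed in the already-established condition (\ref{005}) to cap $n_1^*,n_2^*$; a purely ``direct'' union bound over the top modes cannot succeed because $\delta_l$ has no built-in decay in them.
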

\begin{proof}
Define the resonant set $\mathcal{R}_l$ by
\begin{eqnarray}
 \mathcal{R}_l=\left\{\omega:\left\| \sum_{n\in\mathbb{N}^*}l_n\omega_{n}\right\|< \gamma \prod_{n\in \mathbb{N}^*}\frac{1}{1+l_n^2n^{5}}\right\},
\end{eqnarray}
and
\begin{equation}\label{020701}
\mathcal{R}_1=\bigcup_{l\in\mathbb{Z}^{\mathbb{N}^*}}\mathcal{R}_l.
\end{equation}
Then following the proof of Lemma 4.1 in \cite{BJFA2005}, one has
\begin{equation}\label{012806.}
\mbox{meas}\ \mathcal{R}_1\leq C_1\gamma,
\end{equation}
where $C_1$ is a positive constant.

Define the resonant set $\widetilde{\mathcal{R}}_l$ (where considering $l=k-k'$) by
\begin{eqnarray}
 \widetilde{\mathcal{R}}_l=\left\{\omega:\left\| \sum_{n\in\mathbb{N}^*}l_n\omega_{n}\right\|< \frac{\gamma^3}{16} \prod_{n\in \mathbb{N}^*\atop n\neq n_1^*,n_2^*}\left(\frac{1}{1+l_n^2n^{6}}\right)^4\right\},
\end{eqnarray}
Then one has
\begin{equation}\label{012802}
\mbox{meas}\ \widetilde{\mathcal{R}}_l\leq \frac{m\gamma^3}{16}\prod_{n\geq m\atop n\neq n_1^*,n_2^*}\left(\frac{1}{1+l_n^2n^{6}}\right)^4,
\end{equation}
where $l_j=0$ with $1\leq j\leq m-1$ and $l_m\neq 0$.

Note that
\begin{equation}
 \sum_{n\in\mathbb{N}^*}l_n\omega_{n}= \sum_{n\in\mathbb{N}^*,\atop n\neq n_1^*,n_2^*}l_n\omega_n+\sigma_{n_1^*}\omega_{n_1^*}+\sigma_{n_2^*}\omega_{n_2^*},
\end{equation}
where $\sigma_{n_1^*},\sigma_{n_2^*}\in\left\{-1,1\right\}$. Hence, if $\omega\in\Pi\setminus\mathcal{R}_1$ (where $\mathcal{R}_1$ is defined in (\ref{020701})) and
\begin{equation}\label{012801}
n_2^*\geq \frac4{\gamma}\prod_{n\in\mathbb{N}^*\atop n\neq n_1^*,n_2^*}\left(1+l_n^2n^5\right),
\end{equation}
then
\begin{eqnarray*}
&&\left|\left|\sum_{n\in\mathbb{N}^*,\atop n\neq n_1^*,n_2^*}l_n\omega_n+\sigma_{n_1^*}\omega_{n_1^*}+\sigma_{n_2^*}\omega_{n_2^*}\right|\right|\\
&\geq&\left|\left|\sum_{n\in\mathbb{N}^*,\atop n\neq n_1^*,n_2^*}l_n\omega_n\right|\right|-\left|\left|\omega_{n_1^*}+\omega_{n_2}^*\right|\right|\\
&\geq&  \gamma \prod_{n\in \mathbb{N}^*\atop n\neq n_1^*,n_2^*}\frac{1}{1+l_n^2n^{5}}-\frac2{n_2^*}\\
&\geq &\frac{\gamma}{2} \prod_{n\in \mathbb{N}^*\atop n\neq n_1^*,n_2^*}\frac{1}{1+l_n^2n^{5}},
\end{eqnarray*}
where the last inequality is based on (\ref{012801}). Hence, we always assume
\begin{equation}\label{012803}
n_2^*< \frac4{\gamma}\prod_{n\in\mathbb{N}^*\atop n\neq n_1^*,n_2^*}\left(1+l_n^2n^5\right):=A(l).
\end{equation}
If
\begin{equation}
n_1^*=n_2^*,
\end{equation}
then one has
\begin{equation}
n_1^*<A(l).
\end{equation}
If $n_1^*>n_2^*$, then noting that
\begin{equation}
n_1^*\leq \sum_{i\geq 2}n_i^*,
\end{equation}
and
\begin{equation}
\sum_{i\geq 2}n_i^*=\sum_{n\in\mathbb{N}^*\atop n\neq n_1^*}\left|l_n\right|n\leq \prod_{n\in\mathbb{N}^*\atop n\neq n_1^*}(1+l^2_nn^5)
\end{equation}
which implies
\begin{equation}\label{012804}
n_1^*< A(l)\left(\prod_{n\in\mathbb{N}^*\atop n\neq n_1^*,n_2^*}(1+l^2_nn^5)\right)=\frac{4}{\gamma}\left(\prod_{n\in\mathbb{N}^*\atop n\neq n_1^*,n_2^*}(1+l^2_nn^5)\right)^2:=B(l).
\end{equation}
Then define the resonant set
\begin{equation}
\mathcal{R}_2=\bigcup_{l\in\mathbb{Z}^{\mathbb{N}^*}\atop
n_1^*<B(l),n_2^*<{A(l)}}\widetilde {\mathcal{R}}_l
\end{equation}
In view of (\ref{012802}), (\ref{012803}), (\ref{012804}) and following the proof of (\ref{012806.}), one has
\begin{equation}\label{012807}
\mbox{meas}\ \mathcal{R}_2\leq C_3\gamma,
\end{equation}
where $C_3$ is a positive constant.

Let
$$\Pi_{\gamma}=\Pi\setminus(\mathcal{R}_1\bigcup\mathcal{R}_2),$$
then one has
\begin{equation}\label{012806}
\mbox{meas}\ \Pi_{\gamma}\leq C\gamma,
\end{equation}
and for any $\omega\in\Pi\setminus \Pi_{\gamma}$, the inequalities (\ref{005}) and (\ref{005.}) holds.

\end{proof}

\begin{lem}\label{031902}
The following estimate holds
\begin{equation*}
\left(\prod_{n\in\mathbb{N}^*\atop n\neq n_1^*,n_2^*}\left({1+(k_n-k'_n)^2n^6}\right)^4\right)\times e^{-\delta\left(\sum_{n\in\mathbb{N}^*}(2a_n+k_n+k'_n)n^{\theta}-2n_1^{\theta}\right)}\leq e^{C(\theta)\delta^{-\frac5\theta}},
\end{equation*}
where $C(\theta)$ is a positive constant depending on $\theta$ only.
\end{lem}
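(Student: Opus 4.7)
My plan is to take logarithms and reduce the statement, via Lemma~\ref{020801} and Remark~\ref{013001.}, to a termwise-summable estimate that I then handle by a cutoff argument.

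Write $l_n := k_n - k_n'$ and $\delta' := \delta(2-2^\theta)$. After taking logarithms, the claim becomes
\[
4\sum_{n\neq n_1^*, n_2^*}\ln(1+l_n^2 n^6) \le \delta\Bigl(\sum_n (2a_n+k_n+k_n')n^\theta - 2n_1^\theta\Bigr) + C(\theta)\delta^{-5/\theta}.
\]
By Lemma~\ref{020801} and Remark~\ref{013001.},
\[
\sum_n (2a_n+k_n+k_n')n^\theta - 2n_1^\theta \ge (2-2^\theta)\sum_{i\ge 3} n_i^\theta \ge (2-2^\theta)\sum_{i\ge 3}(n_i^*)^\theta,
\]
and since $(n_i^*)$ repeats each $n$ exactly $|l_n|$ times with top two entries $n_1^*, n_2^*$, the right-hand side is at least $\delta'\sum_{n\neq n_1^*, n_2^*}|l_n|n^\theta$. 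It thus suffices to prove the reduced inequality
\[
4\sum_{n\neq n_1^*, n_2^*}\ln(1+l_n^2 n^6) \le \delta'\sum_{n\neq n_1^*, n_2^*}|l_n|n^\theta + C(\theta)\delta^{-5/\theta}.
\]

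To prove this reduced inequality, introduce the cutoff $N := \delta^{-2/\theta}$ and split the sum according to whether $n\ge N$ (``good'') or $n<N$ (``bad''). In the good range, use the elementary bound $\ln(1+l_n^2 n^6) \le \ln 2 + 2\ln|l_n| + 6\ln n$ (valid since $l_n^2 n^6 \ge 1$): the piece $24\ln n$ is absorbed into $(\delta'/4)n^\theta \le (\delta'/4)|l_n|n^\theta$ because $n^\theta \ge N^\theta = \delta^{-2}$ makes $n^\theta/\ln n$ much larger than $1/\delta'$ for small $\delta$; the piece $8\ln|l_n|$ is absorbed into $(\delta'/8)|l_n|n^\theta$ via $\ln|l_n| \le |l_n|/e$ combined with $n^\theta \gg 1/\delta'$; and the constant $4\ln 2$ likewise. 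So each good-range term satisfies $4\ln(1+l_n^2 n^6) \le (\delta'/2)|l_n|n^\theta$. In the bad range, crudely bound $4\ln(1+l_n^2 n^6) \le 4\ln 2 + 8\ln|l_n| + 24\ln N$ and apply $\ln|l_n| \le \ln(16/\delta') + (\delta'/16)|l_n|$ to absorb the middle term into $(\delta'/2)\sum_{\text{bad}}|l_n|n^\theta$. There are at most $N$ bad indices, so the leftover error is of order $N \cdot [\ln(16/\delta') + \ln N] \sim \delta^{-2/\theta}\ln(1/\delta)$, which is $\le C(\theta)\delta^{-5/\theta}$ for small $\delta$ since any polylog in $\delta^{-1}$ is dominated by $\delta^{-3/\theta}$. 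Summing the good and bad contributions completes the proof.

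The main obstacle is purely technical calibration: one has to track constants so the $\delta'$-budget splits cleanly across the pointwise bounds in both ranges, and verify that the cushion between $N^\theta = \delta^{-2}$ and the target $\delta^{-5/\theta}$ absorbs the polylogarithmic factors from $\ln N$. The exponent $5/\theta$ is in fact generous; any exponent strictly greater than $2/\theta$ suffices for the argument as outlined, so the $5$ is presumably chosen to give ample margin for the subsequent iteration in Lemma~\ref{N3}, where this lemma controls the solution of the homological equation.
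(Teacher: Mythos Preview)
Your proof is correct and follows essentially the same route as the paper: reduce via Lemma~\ref{020801} and Remark~\ref{013001.} to a termwise inequality of the form $\sum_{n\neq n_1^*,n_2^*}\bigl(c\ln(|l_n|n)-\tilde\delta|l_n|n^\theta\bigr)\le C(\theta)\delta^{-5/\theta}$, then handle it by a cutoff in $n$. The only cosmetic difference is that the paper combines $|l_n|$ and $n$ into a single variable $x=|l_n|n$, replacing $|l_n|n^\theta$ by the smaller $|l_n|^\theta n^\theta=x^\theta$, and then simply maximizes the scalar function $f(x)=48\ln x-\tilde\delta x^\theta$ over $x\ge 1$ for the small-$n$ contribution (with cutoff $N=(48/(\theta\tilde\delta))^{4/\theta}$), whereas you keep $\ln|l_n|$ and $\ln n$ separate and absorb them individually with a smaller cutoff $N=\delta^{-2/\theta}$; both lead to the same $\delta^{-5/\theta}$ bound.
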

\begin{proof}By a direct calculation, one has
\begin{eqnarray*}&&\left(\prod_{n\in\mathbb{N}^*\atop n\neq n_1^*,n_2^*}\left({1+(k_n-k'_n)^2n^6}\right)^4\right)\times e^{-\delta\left(\sum_{n\in\mathbb{N}^*}(2a_n+k_n+k'_n)n^{\theta}-2n_1^{\theta}\right)}\\
\nonumber &\leq&e^{4\sum_{n\in\mathbb{N}^*, n\neq n_1^*,n_2^*}\ln(1+(k_n-k'_n)^2n^6)}\times e^{-{\delta}{\left(2-2^{\theta}\right)}\left(\sum_{i\geq3}n_i^{\theta}\right)}\qquad\nonumber\mbox{(in view of (\ref{001.}))}\\
&\leq&\nonumber e^{1+48\left(\sum_{{n\in\mathbb{N}^{*}:k_n\neq k'_n\atop n\neq n_1^*,n_2^*}}\ln\left(|k_n-k'_n|n\right)\right)-{\delta}{\left(2-2^{\theta}\right)}\left(\sum_{i\geq3}\left(n_i^*\right)^{\theta}\right)}\qquad\nonumber(\mbox{in view of Remark \ref{013001.}})\\
&=&\nonumber e^{1+\sum_{{n\in\mathbb{N}^{*}:k_n\neq k'_n\atop n\neq n_1^*,n_2^*}}\left(48\ln\left(|k_n-k'_n|n\right)-{\delta}{\left(2-2^{\theta}\right)}\left|k_n-k_n'\right|n^{\theta}\right)}\\
&\leq&\nonumber e^{1+\sum_{|n|\leq N:k_n\neq k'_n}\left(48\ln\left(|k_n-k'_n|n\right)-\tilde{\delta}|k_n-k_n'|^{\theta}n^{\theta}\right)}\\
&&+\nonumber\frac{16e}{\gamma^3} ||{R_0}||_{\rho}^+e^{\sum_{n>N:k_n\neq k'_n}\left(48\ln\left(|k_n-k'_n|n\right)-\tilde{\delta}|k_n-k_n'|^{\theta}n^{\theta}\right)}\\
&&\nonumber \mbox{$\left( \mbox{where}\ \tilde{\delta}={\delta}{\left(2-2^{\theta}\right)} \ \mbox{and}\ N=\left(\frac{48}{\theta \tilde{\delta} }\right)^{4/\theta}\ \mbox{and noting}\ \left|k_n-k_n'\right|^{\theta}\leq \left|k_n-k_n'\right|\right)$}\\
&\leq&\nonumber e^{\left(1+\frac{48}{\theta \tilde{\delta}}\right)^{4/\theta}\cdot \frac{48}{\theta}\ln\left(\frac{48}{\theta \tilde{\delta}}\right)}+e\qquad{\mbox{(in view of  (\ref{065}) and (\ref{066}) below)}}\\
&\leq &e^{C(\theta)\delta^{-\frac5\theta}},
\end{eqnarray*}
where $C(\theta)$ is a positive constant depending on $\theta$ only.

For $0<\delta\ll1$, it is easy to verify the following two facts that:

 (1) let $f(x)=48\ln x-\tilde\delta x^{\theta}$, and then
\begin{equation}\label{065}
\max_{x\geq 1} f(x)=f\left(\left(\frac{48}{\theta\tilde\delta}\right)^{1/\theta}\right)= 48\ln\left(\left(\frac{48}{\theta\tilde\delta}\right)^{1/\theta}\right)-\frac {48}{\theta}\leq \frac{48}{\theta}\ln\left(\frac{48}{\theta\tilde\delta}\right);
\end{equation}

(2) for $k_n\neq k_n'$ and $n>N=\left(\frac{48}{\theta\tilde\delta}\right)^{4/\theta}$, one has
\begin{equation}\label{066}
48\ln\left(|k_n-k'_n|n\right)-\tilde\delta\left(|k_n-k_n'|^{\theta}n^{\theta}\right)<0.
\end{equation}
\end{proof}

\end{document}